\newcommand{\Haus}{\dim_{\mathrm{H}}}
\newtheorem{thm}{Theorem}[section]
\newtheorem{ques}[thm]{Question}
\newtheorem{lma}[thm]{Lemma}
\newtheorem{cor}[thm]{Corollary}
\newtheorem{defn}[thm]{Definition}
\newtheorem{conj}[thm]{Conjecture}
\newtheorem*{thm*}{Theorem}
\newtheorem*{conj*}{Conjecture}
\newtheorem*{lma*}{Lemma}
\begin{document}
	
	\title{On GILP's group theoretic approach to Falconer's distance problem}
	
	\author{Han Yu}
	\address{Han Yu\\
		School of Mathematics \& Statistics\\University of St Andrews\\ St Andrews\\ KY16 9SS\\ UK \\ }
	\curraddr{}
	\email{hy25@st-andrews.ac.uk}
	\thanks{}

	\subjclass[2010]{Primary: 28A80, Secondary:    52C10,     52C15,     52C35 }
	
	\keywords{distance set, finite points configuration, combinatorial geometry}
	
	\date{}
	
	\dedicatory{}
	
	\begin{abstract}
		In this paper, we follow and extend a group-theoretic method introduced by Greenleaf-Iosevich-Liu-Palsson (GILP) to study finite points configurations spanned by Borel sets in $\mathbb{R}^n,n\geq 2,n\in\mathbb{N}.$ We remove a technical continuity condition in a GILP's theorem in \cite{GILP15}. This allows us to extend the Wolff-Erdogan dimension bound for distance sets to finite points configurations with $k$ points for $k\in\{2,\dots,n+1\}.$ At the end of this paper, we extend this group-theoretic method and illustrate a `Fourier free' approach to Falconer's distance set problem for the Lebesgue measure. We explain how to use tubular incidence estimates in distance set problems. Curiously, tubular incidence estimates are also related to the Kakeya problem. 
	\end{abstract}

	\maketitle
	\section{Introduction}
	Let $n\geq 2$ and $k\in\{2,\dots,n+1\}$ be integers. In this paper we study $k$ points configurations spanned by a subset $F\subset\mathbb{R}^n.$ We start with some definitions. 
	\begin{defn}
		Let $n\geq 2$ and $2\leq k\leq n+1$ be integers. Given a set $F\subset\mathbb{R}^n$, define 
		\[
		\Delta_k(F)=\{(r_{ij},1\leq i<j\leq k)\in\mathbb{R}^{k(k-1)/2}: x_1,\dots,x_k\in F, |x_i-x_j|=r_{ij}, 1\leq i<j\leq k  \}.
		\]
	\end{defn} 
	\begin{defn}\label{Def1}
		Let $F\subset\mathbb{R}^n$ be a Borel set and let $\mu$ be a probability measure supported on $F$. For $g\in\mathbb{O}(n)$, the orthogonal group on $\mathbb{R}^n$, we construct a measure $\nu_g$ as follows,
		\[
		\int_{\mathbb{R}^n} f(z)d\nu_g(z)=\int_F\int_F f(u-gv)d\mu(u)d\mu(v),\forall f\in C_0(\mathbb{R}^n),
		\]
		here $C_0(\mathbb{R}^n)$ is the space of continuous functions with compact support on $\mathbb{R}^n.$ In other words, $\nu_g=\mu*g\mu.$ We also construct a measure $\nu$ on $\Delta_k(F)\subset\mathbb{R}^{k(k-1)/2}$ by
		\[
		\int f(t)d\nu(t)=\int f(|x_1-x_2|,\dots,|x_i-x_j|,\dots,|x_{k-1}-x_k|)d\mu(x_1)\dots d\mu(x_k), \forall f\in C_0(\mathbb{R}^{k(k-1)/2}),
		\]
		where $t$ is a $k(k-1)/2$-vector with entries $|x_i-x_j|$ for $1\leq i<j\leq k.$
	\end{defn}
	In this way, we see that $\nu$ is `the natural measure' supported on $\Delta_k(F).$ In particular, we have $\Haus \nu\leq \Haus \Delta_k(F).$ We will introduce some notions of dimensions in the following section. Notice that our definitions are slightly different than those in \cite{GILP15}. Here we use $k$ to denote the number of vertex of the `simplex structures' we want to count in $F$ while in \cite{GILP15}, $k$ is the order of the simplices. For example, when $k=2$, our definition gives distance sets while the definitions in \cite{GILP15} gives triangle sets.
	
	In this paper, we prove the following result which extends \cite[Theorem 1.3]{GILP15}. The $L^2$ function part was essentially proved in \cite{GILP15} with an additional condition that $\nu_g$ needs to be absolutely continuous with respect to the Lebesgue measure for almost all $g\in\mathbb{O}(n).$
	\begin{thm}\label{HAR1}
		Let $\mu$ be a $s$-Frostman measure with compact support on $\mathbb{R}^n.$ Let $k\in\{2,\dots,n+1\}$ be an integer and $\nu_g,\nu$ be as in Definition \ref{Def1}. We write $\hat{\nu}$ for the Fourier transform of $\nu$. Then for each $\epsilon>0$ there are constants $C,C_\epsilon>0$ such that for all $\delta>0$ we have
		\[
		\int_{B_{\delta^{-1}}(0)} |\hat{\nu}(\omega)|^2 d\omega\leq C\delta^{-n(k-1)}\int\int \nu^{k-1}_g(B_{2.5\delta}(z))d\nu_g(z)dg\leq C_\epsilon\max\{\delta^{-((n-s)(k-1)-\gamma_s+\epsilon)},1\}.
		\]
		If $-((n-s)(k-1)-\gamma_s+\epsilon)>0$ then $\nu$ can be viewed as an $L^2$ function.    Here $\gamma_s$ can be chosen as follows,
		\[
		\gamma_s=
		\begin{cases}
		s & s\in (0,(n-1)/2];\\
		(n-1)/2 & s\in [(n-1)/2,n/2];\\
		(n+2s-2)/4 & s\in [n/2,(n+2)/2];\\
		s-1 & s\in [(n+2)/2,n).
		\end{cases}
		\]
	\end{thm}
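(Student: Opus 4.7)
The plan is to prove the two inequalities separately. The first is a geometric inequality from $L^2$-Fourier smoothing plus a rigid motion rigidity argument — this is where the absolute-continuity assumption of \cite{GILP15} is removed. The second is an analytic bound from the Wolff--Erdogan spherical averaging estimate for $|\hat{\mu}|^2$ together with the Frostman energy of $\mu$.

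\textbf{First inequality.} Pick a Schwartz function $\phi$ on $\mathbb{R}^{k(k-1)/2}$ with $\phi\ge 0$ and $\hat{\phi}\ge 1$ on the unit ball, set $\phi_\delta=\delta^{-k(k-1)/2}\phi(\cdot/\delta)$, and apply Plancherel to obtain
\[
\int_{B_{\delta^{-1}}(0)}|\hat{\nu}(\omega)|^2\,d\omega\le\int|\hat{\nu}(\omega)|^2|\hat{\phi}(\delta\omega)|^2\,d\omega=\|\nu*\phi_\delta\|_2^2\lesssim\delta^{-k(k-1)/2}\int\nu(B_{C\delta}(s))\,d\nu(s).
\]
By Definition~\ref{Def1}, the right-hand integral is $\mu^{\otimes 2k}(E_\delta)$, where $E_\delta\subset F^{2k}$ is the set of pairs of $k$-tuples $(\vec x,\vec y)$ whose $\binom{k}{2}$ pairwise distances agree up to $C\delta$. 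The geometric heart is a rigidity statement: whenever $\vec x$ spans a non-degenerate configuration, $(\vec x,\vec y)\in E_\delta$ forces the existence of $g\in\mathbb{O}(n)$ such that $y_1-gx_1,\ldots,y_k-gx_k$ all lie in a common ball of radius $O(\delta)$. Moreover, for each such $(\vec x,\vec y)$ the set of admissible $g$'s has $\mathbb{O}(n)$-Haar measure $\gtrsim\delta^{(k-1)(2n-k)/2}$, since by the coarea formula its codimension in $\mathbb{O}(n)$ equals $\binom{n}{2}-\binom{n-k+1}{2}=(k-1)(2n-k)/2$ (the complement of the frame stabilizer $\mathbb{O}(n-k+1)$). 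Writing $z=y_1-gx_1$ and using that $(x,y)\mapsto y-gx$ pushes $\mu\otimes\mu$ forward to $\nu_g$, the $k-1$ residual proximity constraints contribute $\nu_g(B_{C\delta}(z))^{k-1}$. Integrating by Fubini,
\[
\mu^{\otimes 2k}(E_\delta)\lesssim\delta^{-(k-1)(2n-k)/2}\int\int\nu_g(B_{C\delta}(z))^{k-1}\,d\nu_g(z)\,dg.
\]
The algebraic identity $k(k-1)/2+(k-1)(2n-k)/2=n(k-1)$ then yields the first inequality. This argument uses only small-ball measures $\nu_g(B_{C\delta}(z))$ and therefore does not need $\nu_g\ll\mathrm{Leb}$.

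\textbf{Second inequality.} Using $\nu_g(B_{C\delta}(z))\lesssim\delta^n(\nu_g*\psi_\delta)(z)$ for a scale-$\delta$ bump $\psi_\delta$ on $\mathbb{R}^n$,
\[
\int\int\nu_g^{k-1}(B_{C\delta}(z))\,d\nu_g(z)\,dg\lesssim\delta^{n(k-1)}\int\int(\nu_g*\psi_\delta)^{k-1}\,d\nu_g\,dg.
\]
For $k\ge 3$, bound $k-2$ of the $(\nu_g*\psi_\delta)$-factors pointwise by the Frostman estimate $\nu_g*\psi_\delta\lesssim\delta^{s-n}$ (from $\mu(B_\delta)\lesssim\delta^s$). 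This reduces to the single-factor integral which, by Plancherel and $|\hat{\nu_g}(\omega)|^2=|\hat{\mu}(\omega)|^2|\hat{\mu}(g^{-1}\omega)|^2$, equals
\[
\int\int(\nu_g*\psi_\delta)(z)\,d\nu_g(z)\,dg=\int|\hat{\mu}(\omega)|^2\left(\int_{\mathbb{O}(n)}|\hat{\mu}(g^{-1}\omega)|^2\,dg\right)\hat{\psi}(\delta\omega)\,d\omega.
\]
The bracketed inner integral is the spherical average of $|\hat{\mu}|^2$ at radius $|\omega|$, which by the Wolff--Erdogan/Mattila bound is $\lesssim|\omega|^{-\gamma_s}$ with $\gamma_s$ as in the theorem. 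Combining with the Frostman energy $\int_{|\omega|\le R}|\hat{\mu}|^2\lesssim_\epsilon R^{n-s+\epsilon}$ and using $\hat{\psi}(\delta\omega)$ to restrict to $|\omega|\lesssim\delta^{-1}$, a dyadic sum over $|\omega|\sim 2^j$ yields $\lesssim_\epsilon\max\{\delta^{-(n-s-\gamma_s+\epsilon)},1\}$. Re-assembling all the factors produces the advertised bound.

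\textbf{Main obstacle.} The principal novelty, and the main technical challenge, is handling possibly singular $\nu_g$, a case ruled out in \cite{GILP15}. The fix is to phrase the rigidity step exclusively in terms of small-ball measures $\nu_g(B_{C\delta}(z))$, which are well-defined regardless of singularity; a density is then introduced only once, through the smoothing $\psi_\delta$ in the second step where Plancherel is applied. The other delicate ingredient is the rigidity Jacobian $\delta^{-(k-1)(2n-k)/2}$, which must combine exactly with the $L^2$-smoothing loss $\delta^{-k(k-1)/2}$ to produce the prefactor $\delta^{-n(k-1)}$ of the theorem — the identification of this codimension with the dimension of the frame stabilizer is the source of the group-theoretic structure exploited throughout.
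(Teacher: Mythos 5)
Your overall architecture --- a geometric inequality passing through the orthogonal group, phrased entirely in terms of small-ball measures $\nu_g(B_{C\delta}(z))$ so that no absolute continuity of $\nu_g$ is ever needed, followed by an analytic estimate via the Wolff--Erdogan spherical average --- matches the paper's (Lemmas \ref{GILP} and \ref{ENERGY} plus the Fourier computation), and your $k=2$ case is essentially the paper's argument. However, your second inequality has a genuine quantitative gap for $k\geq 3$. You bound $k-2$ of the factors by the global estimate $\|\nu_g*\psi_\delta\|_\infty\lesssim\delta^{s-n}$ and then estimate the remaining bilinear term by $\max\{\delta^{-(n-s-\gamma_s+\epsilon)},1\}$; the product is $\delta^{-(n-s)(k-2)}\max\{\delta^{-(n-s-\gamma_s+\epsilon)},1\}$, which is strictly weaker than the claimed $\max\{\delta^{-((n-s)(k-1)-\gamma_s+\epsilon)},1\}$ whenever $n-s-\gamma_s+\epsilon<0$. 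That is precisely the regime that matters: for $k\geq3$ the $L^2$ conclusion requires $(n-s)(k-1)<\gamma_s$, which forces $n-s<\gamma_s/2$ and hence $n-s-\gamma_s<0$, so your bound degenerates to $\delta^{-(n-s)(k-2)}\to\infty$ while the theorem asserts $O(1)$ (e.g.\ $n=2$, $k=3$, $s=1.9$: the theorem gives a bounded quantity, you get $\delta^{-0.1}$). The paper avoids this with Littlewood--Paley: it decomposes $f_{g,\delta}$ and $\nu_g$ into dyadic frequency pieces, observes that only terms whose top frequencies are comparable survive, and applies the localized bound $\|f_{g,\delta,j}\|_\infty\lesssim 2^{j(n-s)}$ at the \emph{matching} frequency, so the final sum is $\sum_{2^j\lesssim\delta^{-1}}2^{j((n-s)(k-1)-\gamma_s+\epsilon)}$, which is $O(1)$ when the exponent is negative. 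Your decoupled estimate takes a supremum over $j$ in one factor and a sum over $j$ in the other, and cannot produce the theorem's exponent without reinstating some frequency localization.

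A second, lesser issue concerns your first inequality. The rigidity claim --- that $(\vec x,\vec y)\in E_\delta$ with $\vec x$ non-degenerate produces a $g$ with all $y_i-gx_i$ in a common $O(\delta)$-ball, the admissible set of $g$ having Haar measure $\gtrsim\delta^{(k-1)(2n-k)/2}$ --- is not uniform over configurations when $k\geq3$: for a triangle of height $\sqrt{\delta}$ matched against its collinear projection, all pairwise distances agree to within $O(\delta)$ yet no $g\in\mathbb{O}(n)$ achieves $O(\delta)$-congruence, so the admissible set is empty. Your numerology $k(k-1)/2+(k-1)(2n-k)/2=n(k-1)$ is correct, but the nearly-degenerate configurations require a separate argument; the paper outsources exactly this step to \cite[Section 2]{GILP15} as Lemma \ref{GILP}, and then proves Lemma \ref{ENERGY} to pass from the group-theoretic energy to $\int\nu_g^{k-1}(B_{2.5\delta}(z))\,d\nu_g(z)$ by a sandwiching argument which, like yours, never requires continuity of $\nu_g$.
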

	The above result generalizes \cite[Theorem 1.3]{GILP15} in two ways. First, it provides us a good estimate of the growth of $\|\nu_\delta\|_2^2$ with respect to $\delta\to 0,$ which in turn allows us to estimate the Hausdorff dimension of $\Delta_k(F).$ Second, we can drop the technical continuity condition mentioned above. In this way, the above theorem can be seen as an alternative approach to the dimension results of distance sets discussed in \cite[Chapter 15]{Ma2}. We record the Hausdorff dimension estimate as a corollary.
	\begin{cor}\label{HAR2}
		Let $F\subset\mathbb{R}^n,n\geq 2,n\in\mathbb{N}$ be a Borel set with $\Haus F=s.$ Then for each $k\in\{2,\dots,n+1\}$ we have
		\[
		\Haus \Delta_k(F)\geq \min\left\{\frac{k(k-1)}{2}-n(k-1)+s(k-1)+\gamma_s,\frac{k(k-1)}{2}\right\},
		\]
		where $\gamma_s$ is the same quantity as in the statement of Theorem \ref{HAR1}.
	\end{cor}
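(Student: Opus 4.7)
The plan is to convert the ball-wise Fourier $L^2$ estimate in Theorem \ref{HAR1} into a finite $t$-energy estimate and then invoke the standard Frostman/energy method.

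First I would appeal to Frostman's lemma: for any $s' < s = \Haus F$ there is a compactly supported Borel probability measure $\mu$ on $F$ with $\mu(B_r(x)) \lesssim r^{s'}$ for all $x$ and $r$, i.e., an $s'$-Frostman measure. Let $\nu$ be the associated pushforward measure on $\Delta_k(F) \subset \mathbb{R}^d$ with $d = k(k-1)/2$, as in Definition \ref{Def1}. Since $\nu$ is a finite Borel measure supported on $\Delta_k(F)$, any lower bound on $\Haus \nu$ gives a lower bound on $\Haus \Delta_k(F)$.

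Next I would use the standard identification of the $t$-energy with a Fourier integral: for a compactly supported finite Borel measure on $\mathbb{R}^d$,
\[
I_t(\nu) \;=\; \int\!\!\int |x-y|^{-t}\,d\nu(x)\,d\nu(y) \;\asymp\; \int_{\mathbb{R}^d} |\hat\nu(\omega)|^2 |\omega|^{t-d}\,d\omega,
\]
for $0 < t < d$. Splitting the right-hand side dyadically into annuli $\{|\omega|\sim 2^j\}$ and controlling each annulus by $\int_{B_{2^{j+1}}(0)}|\hat\nu|^2$, the upper bound from Theorem \ref{HAR1} (with $\delta = 2^{-j}$) gives
\[
\int_{|\omega|\sim 2^j} |\hat\nu(\omega)|^2\,d\omega \;\leq\; C_\epsilon \max\bigl\{2^{j((n-s')(k-1) - \gamma_{s'} + \epsilon)},\,1\bigr\}.
\]
Multiplying by $2^{j(t-d)}$ and summing over $j \geq 0$, the resulting geometric series converges whenever
\[
t \;<\; d - (n-s')(k-1) + \gamma_{s'} - \epsilon \;=\; \tfrac{k(k-1)}{2} - n(k-1) + s'(k-1) + \gamma_{s'} - \epsilon.
\]
For any such $t$, the energy method yields $\Haus \Delta_k(F) \geq \Haus \nu \geq t$. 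Sending $s' \to s$ and $\epsilon \to 0$ (using continuity of $s \mapsto \gamma_s$) recovers the first term in the claimed minimum.

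Finally I would handle the alternative regime in Theorem \ref{HAR1}: when $-((n-s')(k-1) - \gamma_{s'} + \epsilon) > 0$ the Fourier $L^2$ bound is uniform in $\delta$, so $\hat\nu \in L^2(\mathbb{R}^d)$ and therefore $\nu$ is absolutely continuous with respect to $d$-dimensional Lebesgue measure on $\mathbb{R}^d$. This forces $\Delta_k(F)$ to have positive Lebesgue measure in $\mathbb{R}^{k(k-1)/2}$, hence $\Haus \Delta_k(F) = k(k-1)/2$; this accounts for the second term in the minimum. The only real point requiring care is the interface between the two regimes and the use of $\epsilon \to 0$ combined with $s' \to s$, but since $\gamma_s$ is piecewise linear and continuous this is routine; the substantive analytic input has already been absorbed into Theorem \ref{HAR1}.
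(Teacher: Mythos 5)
Your proposal is correct and follows essentially the same route as the paper: the paper proves the corollary by combining Theorem \ref{HAR1} with the $L^2$-dimension machinery of Section \ref{L2} (the dyadic annulus decomposition of the energy integral that shows $\Haus\nu\geq d-\limsup\log A(\nu,R)/\log R$), which is exactly the computation you spell out, together with the same observation that in the complementary regime $\nu$ has $L^2$ density and so $\Delta_k(F)$ has positive Lebesgue measure. The only difference is that you make explicit the $s'\to s$, $\epsilon\to 0$ limiting step and the continuity of $s\mapsto\gamma_s$, which the paper leaves implicit.
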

	We will prove the above result in Section \ref{APP}. For example when $k=3,n=2$ we have \[
	\Haus \Delta_3(F)\geq
	\begin{cases}
	3s-1 & s\in [1/3,1/2];\\
	2s-0.5 & s\in [1/2,1];\\
	2.5s-1 & s\in [1,2].
	\end{cases}
	\] 
	More discussions on this topic will be given in Section \ref{BEFORE}.
	
	We have another consequence from Theorem \ref{HAR1}. We can cover $\mathbb{R}^n$ with closed $\delta$-cubes $\mathcal{K}_\delta$ with disjoint interiors. For each $K\in\mathcal{K}_\delta$ we use $2K$ to denote the $2\delta$-cube with the same centre as $K$. Observe that
	\begin{eqnarray*}
		\int \nu^{k-1}_g(B_\delta(z))\nu_g(B_\delta(z))dz&\leq&\sum_{K\in\mathcal{K}_\delta}\int_K \nu^k_g(B_{\delta}(z))dz\\
		&\leq& \sum_{K\in\mathcal{K}_\delta} \delta^{-n} \sup_{z*\in K} \nu^{k}_g(B_\delta(z*)) \\
		&\leq& \delta^{-n}\sum_{K\in\mathcal{K}_\delta}\int_{2K} \nu^{k-1}_g(B_{2\sqrt{n}\delta}(z)) d\nu_g(z).
	\end{eqnarray*}
	Since $\{2K\}_{K\in\mathcal{K}_\delta}$ covers $\mathbb{R}^n$ with maximal multiplicity $2^{n+1},$ we see that
	\[
	\int \nu^{k-1}_g(B_\delta(z))\nu_g(B_\delta(z))dz\leq \delta^{-n} 2^{n+1} \int \nu^{k-1}_g(B_{2\sqrt{n}\delta}(z))d\nu_g(z).
	\]
	By Theorem \ref{HAR1} and the argument above we see that if $(n-s)(k-1)-\gamma_s<0$,
	\[
	\delta^{-kn}\int\int \nu^{k}_g(B_\delta(z))dz dg\lesssim 1.
	\]
	From here we deduce the following corollary.
	
	\begin{cor}\label{ma}
		Let $\mu$ be a $s$-Frostman measure with compact support on $\mathbb{R}^n.$ Let $k\in\{2,\dots,n+1\}$ be an integer and $\nu_g,\nu$ be as in Definition \ref{Def1}. If $-((n-s)(k-1)-\gamma_s)>0$, then for almost all $g\in\mathbb{O}(n)$, $\nu_g$ is an $L^{k}(\mathbb{R}^n)$ function. In particular, for such $g\in\mathbb{O}(n)$, $\nu_g$ is absolutely continuous with respect to the Lebesgue measure.  Here $\gamma_s$ can be chosen as follows,
		\[
		\gamma_s=
		\begin{cases}
		s & s\in (0,(n-1)/2];\\
		(n-1)/2 & s\in [(n-1)/2,n/2];\\
		(n+2s-2)/4 & s\in [n/2,(n+2)/2];\\
		s-1 & s\in [(n+2)/2,n).
		\end{cases}
		\]
	\end{cor}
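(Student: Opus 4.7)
The plan is to leverage the estimate
\[
\delta^{-kn}\int\int \nu^{k}_g(B_\delta(z))\,dz\,dg\lesssim 1
\]
established in the paragraph preceding the corollary, turn it into an $L^k$ bound on a mollification of $\nu_g$ that is uniform in $\delta$, and then extract $\nu_g$ itself as a weak limit in $L^k$.

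First I would set $\eta_\delta=c_n\delta^{-n}\mathbf{1}_{B_\delta(0)}$ with $c_n$ chosen so that $\int\eta_\delta=1$, and observe the pointwise identity
\[
\eta_\delta*\nu_g(z)=c_n\delta^{-n}\nu_g(B_\delta(z)).
\]
Thus the displayed estimate is equivalent, up to constants, to the uniform-in-$\delta$ bound
\[
\int_{\mathbb{O}(n)}\|\eta_\delta*\nu_g\|_{L^k(\mathbb{R}^n)}^k\,dg\lesssim 1.
\]
Applying Fatou's lemma along the sequence $\delta=1/j$ with $j\to\infty$ then gives
\[
\int_{\mathbb{O}(n)}\liminf_{j\to\infty}\|\eta_{1/j}*\nu_g\|_{L^k(\mathbb{R}^n)}^k\,dg\lesssim 1,
\]
so for Haar-almost every $g\in\mathbb{O}(n)$ one can pick a subsequence $\delta_{j_l}\to 0$ along which $\{\eta_{\delta_{j_l}}*\nu_g\}$ stays bounded in $L^k(\mathbb{R}^n)$.

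For any such $g$, since $1<k<\infty$ the space $L^k(\mathbb{R}^n)$ is reflexive, so Banach--Alaoglu lets me pass to a further subsequence converging weakly to some $f_g\in L^k(\mathbb{R}^n)$. On the other hand, $\eta_\delta*\nu_g\to\nu_g$ in the sense of distributions because $\{\eta_\delta\}$ is a standard approximate identity and $\nu_g$ is a finite Radon measure. Uniqueness of the distributional limit forces $\nu_g$ to be represented by the $L^k$ function $f_g$, which is precisely the claimed $L^k$ statement; absolute continuity with respect to Lebesgue measure then follows immediately. The only step that is not purely formal is the weak-compactness plus Fatou combination, but it is standard once the convolution identity for $\delta^{-n}\nu_g(B_\delta(z))$ is written down, so the genuine obstacle has effectively been absorbed into the derivation of the displayed estimate, which itself rests on Theorem \ref{HAR1}.
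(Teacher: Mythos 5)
Your proposal is correct and follows the same route the paper intends: the paper's entire ``proof'' of Corollary \ref{ma} is the displayed uniform bound $\delta^{-kn}\int\int \nu^{k}_g(B_\delta(z))\,dz\,dg\lesssim 1$ followed by ``from here we deduce the corollary,'' and your argument is exactly that deduction. The approximate-identity reformulation, Fatou in $g$, and weak compactness in the reflexive space $L^k$ are the standard details the paper leaves implicit, and you have supplied them correctly.
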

	If $n=s$ then $\nu_g$ is an $L^{\infty}$-function for almost all $g\in\mathbb{O}(n).$ For $k=2$, we see that the positivity criterion happens when 
	\[
	s>\frac{n}{2}+\frac{1}{3}.
	\]
	This improves \cite[Lemma 7.1]{Ma2} which requires that $s>(n+1)/2.$ In fact, \cite[Lemma 7.1]{Ma2} is stated in an asymmetric version. However, our method can be adapted to show the corresponding asymmetric results as well. In Section \ref{asy} we give some sketched discussions in this situation. In \cite[Section 7.3]{Ma2}, it was asked whether the following conjecture is true.
	\begin{conj}
		Let $\mu$ be a $s$-Frostman measure with compact support on $\mathbb{R}^n.$ If $s> n/2$ then for almost all $g\in\mathbb{O}(n)$, $\nu_g$ is absolutely continuous with respect to the Lebesgue measure.
	\end{conj}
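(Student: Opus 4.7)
The plan is to reformulate the conjecture as a projection-type statement and to attack it using the group-theoretic incidence framework set up in this paper. Define $\pi_g:\mathbb{R}^n\times\mathbb{R}^n\to\mathbb{R}^n$ by $\pi_g(u,v)=u-gv$; then $\nu_g=(\pi_g)_*(\mu\times\mu)$, while $\mu\times\mu$ is essentially a $(2s)$-Frostman measure on $\mathbb{R}^{2n}$. Hence the hypothesis $s>n/2$ corresponds exactly to the Marstrand-style threshold $\dim(\mu\times\mu)>n$ at which $n$-dimensional linear projections of a measure on $\mathbb{R}^{2n}$ are expected to be absolutely continuous for a typical projection in any sufficiently transversal family. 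The first step would be to realize $\{\pi_g\}_{g\in\mathbb{O}(n)}$ as a smooth $\binom{n}{2}$-parameter family of rank-$n$ surjections and attempt a Peres--Schlag style transversality analysis; the source of difficulty is that $\dim\mathbb{O}(n)=\binom{n}{2}$ is much smaller than $\dim G(2n,n)=n^2$, so the classical Marstrand framework does not apply directly.

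Concretely, I would exploit the $g$-averaged incidence integral already controlled by Theorem \ref{HAR1}. Applying Fubini to perform the $g$-integral first yields
\[
\int_{\mathbb{O}(n)}\!\!\int \nu_g(B_\delta(z))\,d\nu_g(z)\,dg = \int\!\!\int\!\!\int\!\!\int \mathrm{Haar}\bigl\{g\in\mathbb{O}(n):|p-gq|\leq\delta\bigr\}\,d\mu(u)d\mu(u')d\mu(v)d\mu(v'),
\]
with $p=u-u'$ and $q=v-v'$. A direct computation shows that for $q\neq 0$ the Haar measure is comparable to $\bigl(\delta/|q|\bigr)^{n-1}$ when $\bigl||p|-|q|\bigr|\leq\delta$ and vanishes otherwise. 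Substituting this reduces the right hand side to a $\delta$-truncated weighted energy integral of $\mu\times\mu$ localized near the hypersurface $\{|p|=|q|\}$; the target is to bound it by $O(\delta^n)$. Combined with a routine Fatou plus Lebesgue-differentiation argument, such a bound upgrades the averaged estimate to pointwise absolute continuity of $\nu_g$ for almost every $g$.

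The main obstacle, and the reason Corollary \ref{ma} falls short by $1/3$ in the $k=2$ case, is the dimensional deficit of $\mathbb{O}(n)$: the derivative of the orbit $g\mapsto gq$ spans only the tangent hyperplane $(gq)^\perp$, leaving one direction unaccounted for. A soft transversality argument therefore loses a derivative relative to the Grassmannian case. Closing this gap seems to require extracting additional decay from the curvature of the orbit spheres, either through a stationary phase analysis (yielding refined spherical $L^2$ averages of $\hat\mu$ beyond those encoded in $\gamma_s$), or through an $L^p$-improving estimate for the associated spherical maximal function, or via the tubular incidence / ``Fourier-free'' approach outlined in the abstract. I would expect the critical regime of $s$ just above $n/2$ to be the most delicate, since the natural limiting energy governing the reduction is borderline divergent there, and a structural dichotomy between concentrated and non-concentrated configurations of $\mu$ may be unavoidable.
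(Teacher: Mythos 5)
This statement is labelled as a \emph{conjecture} in the paper (it is the open question from \cite[Section 7.3]{Ma2}), and the paper offers no proof of it; the closest the paper comes is Corollary \ref{ma}, which establishes the same conclusion only under the stronger hypothesis $s>n/2+1/3$ (for $k=2$). Your submission is likewise not a proof but a research outline, and you are candid about this (``seems to require,'' ``I would expect''). The reductions you do carry out are correct: the identity $\nu_g=(\pi_g)_*(\mu\times\mu)$, the Fubini computation turning the $g$-averaged ball integral into $\int \mathrm{Haar}\{g:|p-gq|\le\delta\}\,d\mu^4$ with $p=u-u'$, $q=v-v'$, and the estimate $\mathrm{Haar}\{g:|p-gq|\le\delta\}\approx(\delta/|q|)^{n-1}$ on the region $\bigl||p|-|q|\bigr|\le\delta$ are all accurate, and the Fatou/differentiation upgrade at the end is the same device the paper uses to deduce Corollary \ref{ma} from Theorem \ref{HAR1}.

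The genuine gap is that this reduction lands you exactly on the quantity the paper already controls --- the group-theoretic energy $E^2(\mu,g,\delta)$ averaged over $g$, equivalently the Mattila-type integral governed by the spherical averages $S(\mu,R)$ --- and the only known way to bound it is through the Wolff--Erdogan decay exponent $\gamma_s$, which is precisely what caps the method at $s>n/2+1/3$. So the proposal does not circumvent the obstruction; it rederives the setup in which the obstruction lives, and the step that would close the gap (``extracting additional decay from the curvature of the orbit spheres,'' an improved spherical average, or the tubular incidence bound $(\mathrm{SzT})$/$(\mathrm{Guess})$ from Section \ref{dis}) is left entirely open --- indeed each of these is itself an open problem at least as hard as the conjecture. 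Your diagnosis of \emph{why} the classical Marstrand/Peres--Schlag framework fails (the family $\{\pi_g\}$ has only $\binom{n}{2}$ parameters versus $n^2$ for the full Grassmannian) is a correct and worthwhile observation, but as a proof attempt this must be judged incomplete: no new estimate is proved, and the conjecture remains exactly where the paper leaves it.
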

	
	The proof of Theorem \ref{HAR1} relies heavily on Fourier analysis. Although there are some 'Fourier free' methods which deal with the distance set problem in $\mathbb{R}^2$ (see for example \cite{OP}, \cite{S}), all methods dealing with the Lebesgue measure of distance sets do use Fourier analysis, see \cite[page 62]{Ma2}. In this paper, we also introduce a method which is very close to Guth-Katz's approach (\cite{GK}). We reduce the distance set problem to tubular incidence estimates. Curiously, tubular incidence estimates also have some connections with the Kakeya problem, see Section \ref{dis}.
	
	\section{Notation}
	1. Let $f$ be a function on $\mathbb{R}^n,$ we write $\hat{f}$ for its Fourier transform,
	\[
	\hat{f}(\omega)=\int f(x)e^{-2\pi i (\omega,x)}dx,
	\]
	where $\omega\in\mathbb{R}^n$ and $(\omega,x)$ is the Euclidean inner product between $\omega$ and $x$. Let $\mu$ be a probability measure on $\mathbb{R}^n$ we also write $\hat{\mu}$ for its Fourier transform,
	\[
	\hat{\mu}(\omega)=\int e^{-2\pi i (\omega,x)}d\mu(x).
	\]
	
	2. For each integer $n\geq 1$, we will often need to find a smooth cutoff function $\phi_n$ on $\mathbb{R}^n.$ More precisely, we define $\phi$ to be $1$ on the unit ball and $0$ outside the ball of radius $2$ centred at the origin. Then we can smoothly construct this function $\phi_n.$ When the ambient space is clear, we will write $\phi=\phi_n$ for simplicity. 
	
	Let $\delta>0$ we write $\phi_\delta$ to be the function
	\[
	x\in\mathbb{R}^n \to \delta^{-n} \phi(x\delta^{-1}).
	\]
	Throughout this paper, we never use the symbol $\phi_n$ for the cutoff function we chose on $\mathbb{R}^n$. In this way, no confusions should arise between $\phi_\delta$ and $\phi_n.$ Let $f$ be a function on $\mathbb{R}^n$ we write $f_\delta=f*\phi_\delta.$ Similarly for a measure $\mu$ we write $\mu_\delta=\mu*\phi_\delta.$
	
	3. It is convenient to introduce notions $\approx, \lesssim, \gtrsim$ for approximately equal, approximately smaller and approximately larger. As our estimates always involve scales,  we use $1>\delta>0$ to denote a particular scale. Then for two quantities $f(\delta), g(\delta)$ we define the following:
	\[
	f\lesssim g\iff \exists M>0, \forall \delta>0, f(\delta)\leq M g(\delta).
	\] 
	\[
	f\gtrsim g\iff g\lesssim f.
	\]
	\[
	f\approx g\iff f\lesssim g \text{ and } g\lesssim f.
	\]
	We will use the same symbols for scales tending to $\infty$ as well. More precisely, for $R\in (0,\infty)$, and quantities $f(R), g(R)$, we write
	\[
	f(R)\lesssim g(R)
	\]
	if there is a constant $C>0$ such that $f(R)\leq C g(R)$ for all $R>0.$ Similar meanings can be given to symbols $\gtrsim$ and $\approx.$
	\section{Preliminaries}
	
	\subsection{Hausdorff dimension for sets}
	Let $n\geq 1$ be an integer. Let $F\subset\mathbb{R}^n$ be a Borel set. For any $s\in\mathbb{R}^+$ and $\delta>0$ define the following quantity
	\[
	\mathcal{H}^s_\delta(F)=\inf\left\{\sum_{i=1}^{\infty}(\mathrm{diam} (U_i))^s: \bigcup_i U_i\supset F,\forall i\geq 1, U_i\subset\mathbb{R}^n,\mathrm{diam}(U_i)<\delta\right\}.
	\]
	The $s$-Hausdorff measure of $F$ is
	\[
	\mathcal{H}^s(F)=\lim_{\delta\to 0} \mathcal{H}^s_{\delta}(F).
	\]
	The Hausdorff dimension of $F$ is
	\[
	\Haus F=\inf\{s\geq 0:\mathcal{H}^s(F)=0\}=\sup\{s\geq 0: \mathcal{H}^s(F)=\infty          \}.
	\]
	More details about the Hausdorff dimension can be found in \cite{Fa} and \cite{Ma1}.
	\subsection{Frostman's measure}
	It is known (for example, see \cite[Theorme 2.7]{Ma2}) that if $F$ is a Borel subset of $\mathbb{R}^n$ with $\Haus F=s$, then for any $\epsilon>0$ there is a measure $\mu$ supported in $F$ such that for all $x\in F$ and $r>0$ we have $\mu(B(x,r))\leq r^{s-\epsilon}.$ Such a measure $\mu$ is usually called a $(s-\epsilon)$-Frostman measure.
	
	\subsection{Energy integrals and Hausdorff dimension for measures}
	Let $\mu\in\mathcal{P}(\mathbb{R}^n),$ the space of Borel probability measures on $\mathbb{R}^n.$ For each positive number $t>0$ we define the $t$-energy of $\mu$ to be
	\[
	I_t(\mu)=\int\int \frac{d\mu(x)d\mu(y)}{|x-y|^t}.
	\]
	Via Fourier transform it can be shown that
	\[
	I_t(\mu)=\gamma(n,s) \int |\hat{\mu}(\omega)|^2|\omega|^{t-n}d\omega,
	\]
	where $\gamma(n,s)=\pi^{s-n/2}\Gamma((n-s)/2)/\Gamma(s/2)$ and when $s\in (0,n)$ we have $\gamma(n,s)\in (0,\infty),$ see \cite[Sections 3.4,3.5]{Ma2}. We define the Hausdorff dimension of $\mu$ as follows,
	\[
	\Haus\mu=\sup\{t>0: I_t(\mu)<\infty\}.
	\]
	Let $F\subset\mathbb{R}^n$ be a Borel set, then we have
	\[
	\Haus F=\sup\{t>0: \exists \mu\in\mathcal{P}(F), I_t(\mu)<\infty\}.
	\]
	This implies that if $\mu\in\mathcal{P}(F)$, we have $\Haus \mu\leq \Haus F.$
	
	\subsection{Spherical averages and Wolff-Erdogan's estimate}
	Let $\mu\in\mathcal{P}(\mathbb{R}^n).$ We define the following spherical average for $\hat{\mu},$
	\[
	S(\mu,R)=\int_{S^{n-1}} |\hat{\mu}(R\sigma)|^2 d\sigma,
	\]
	where $d\sigma$ is the normalized Lebesgue measure on $S^{n-1}.$ We have the following deep result on the decay rate of $S(\mu,R)$ as $R\to\infty,$ see \cite{W2}, \cite{E}. The following version is taken from \cite[Theorem 15.7]{Ma2}.
	
	\begin{thm}[Wolff-Erdogan estimate]\label{WE}
		Let $\mu\in\mathcal{P}(\mathbb{R}^n)$ with compact support, for each $s\geq n/2,\epsilon>0,$ there is a positive constant $C(n,s,\epsilon)$ and for all $R>0$ we have
		\[
		S(\mu,R)\leq C(n,s,\epsilon) R^{\epsilon-\gamma_s} I_s(\mu).
		\]
		Here $\gamma_s$ can be chosen as follows,
		\[
		\gamma_s=
		\begin{cases}
		s & s\in (0,(n-1)/2];\\
		(n-1)/2 & s\in [(n-1)/2,n/2];\\
		(n+2s-2)/4 & s\in [n/2,(n+2)/2];\\
		s-1 & s\in [(n+2)/2,n).
		\end{cases}
		\]
	\end{thm}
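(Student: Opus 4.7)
The plan is to reduce the spherical average decay to a restriction-type estimate for the sphere $S^{n-1}$ and then invoke bilinear restriction theory in the critical range. First I would introduce a smooth bump $\psi$ adapted to the annulus $\{|\xi|\sim R\}$ and use Plancherel to rewrite
\[
S(\mu,R)\approx R^{1-n}\int \psi(\xi/R)|\hat{\mu}(\xi)|^{2}\,d\xi,
\]
so the problem becomes quantifying how $\hat\mu$ concentrates on thin annular shells. Combined with the Fourier representation $I_s(\mu)\approx \int|\hat\mu(\xi)|^{2}|\xi|^{s-n}d\xi$, the Frostman/energy hypothesis plays exactly the role of the weighted $L^{2}$ density bound that substitutes, in this $L^{2}$-based setting, for the $L^{p}$ density assumption appearing in classical extension theorems for the sphere.

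Next I would partition the annulus into caps $\tau$ of angular width $R^{-1/2}$, corresponding by uncertainty to spatial tubes of dimensions $R^{1/2}\times\cdots\times R^{1/2}\times R$, and decompose $\hat\mu\psi=\sum_\tau \hat\mu\psi_\tau$. Expanding $|\hat\mu\psi|^{2}$ and sorting bilinear pairs $(\tau,\tau')$ into transverse and tangent configurations, I would apply Tao's bilinear extension inequality for $S^{n-1}$ to transverse pairs, iterate on smaller scales for the tangent pairs, and interpolate the resulting estimate against $I_s(\mu)$. This combination is what produces the Erdogan exponent $\gamma_s=(n+2s-2)/4$ on the central range $s\in[n/2,(n+2)/2]$. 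The flat plateau $s\in[(n-1)/2,n/2]$ then falls out from the classical Stein-Tomas restriction exponent applied to a mollification of $\mu$ at scale $1/R$, while the upper range $s\in[(n+2)/2,n)$ is obtained by combining the central range with the trivial bound $\|\mu\ast\check\psi_R\|_\infty\lesssim R^{n-s}$ together with Cauchy-Schwarz.

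The main obstacle is the bilinear restriction inequality for $S^{n-1}$ used in the transverse step: its proof is genuinely deep, requiring a wave-packet decomposition, Wolff's two-ends device, an intricate geometric analysis of incidences between families of transverse tubes, and an induction-on-scales argument which is where the $R^{\epsilon}$ loss in the statement originates. Once this input is accepted as a black box, the remaining steps are essentially packaging: a dyadic decomposition of $\mu$ in frequency, careful bookkeeping of the mollification errors, and an interpolation that converts the linear/bilinear extension bounds into a statement about the weighted average $S(\mu,R)$ against $I_s(\mu)$.
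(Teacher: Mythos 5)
The paper does not prove this statement at all: it is imported verbatim from \cite[Theorem 15.7]{Ma2} as a known deep result, combining work of Mattila (the ranges $s\leq n/2$), Wolff \cite{W2} and Erdogan \cite{E} (the central range $s\in[n/2,(n+2)/2]$), and Sj\"olin (the range $s\geq (n+2)/2$). So there is no in-paper argument to compare yours against; the intended ``proof'' here is a citation, and that is also the only realistic option within the scope of this paper.

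Read as a standalone proof, your outline is a fair roadmap of Erdogan's argument for the central range --- you correctly locate the source of the exponent $(n+2s-2)/4$ and of the $R^{\epsilon}$ loss in Tao's bilinear restriction theorem --- but it has genuine gaps. First, the passage from the bilinear extension estimate to the weighted spherical average bound against $I_s(\mu)$ is not mere packaging: handling the tangent pairs by ``iterating on smaller scales'' and then ``interpolating against $I_s(\mu)$'' is exactly the technical heart of \cite{E}, and it requires a pointwise Frostman bound $\mu(B(x,r))\lesssim r^s$ rather than just $I_s(\mu)<\infty$, so one must first decompose a general finite-energy measure into Frostman pieces; nothing in your sketch executes either step. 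Second, your opening reduction $S(\mu,R)\approx R^{1-n}\int\psi(\xi/R)|\hat{\mu}(\xi)|^2\,d\xi$ is only a one-sided domination and needs the compact support of $\mu$ (via $\hat{\mu}=\hat{\mu}*\hat{\rho}$ for a bump $\rho\equiv 1$ on $\mathrm{supp}\,\mu$) to control the value on a single sphere by an average over a unit-thickness annulus. Third, the attributions for the outer ranges are off: the plateau $\gamma_s=(n-1)/2$ on $[(n-1)/2,n/2]$ comes from Mattila's elementary argument using the stationary-phase decay $|\widehat{\sigma}(\xi)|\lesssim|\xi|^{-(n-1)/2}$ of the surface measure, not from Stein--Tomas, and the range $s\in[(n+2)/2,n)$ is Sj\"olin's separate argument rather than a Cauchy--Schwarz corollary of the central range. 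If the theorem is to be used as a black box, as this paper does, you should simply cite it; if a proof is genuinely required, the tangent-term iteration and the energy-to-Frostman reduction must be supplied in full.
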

	Thus, if $\Haus \mu>s$ then we see that $I_s(\mu)<\infty$ and $S(\mu,R)\lesssim R^{-\gamma_s}.$

	\subsection{Distance sets and finite points configurations}\label{BEFORE}
	Let $F\subset\mathbb{R}^n$ be a Borel set. We have defined $\Delta_k(F)$ for all $k\in \{2,\dots,n+1\}.$ A special case is when $k=2.$ In this case we write $D(F)=\Delta_2(F)$ and call it the distance set of $F.$ If $F$ is a finite set in $\mathbb{R}^2$, by a result in \cite{GK} we have
	\[
	\#D(F)\gtrsim \#F/\log \#F.
	\]
	Here for a set $A$ we use $\#A$ to denote the cardinality of $A$. For $F$ being a Borel set with positive Hausdorff dimension, we are interested in whether $D(F)$ has full Hausdorff dimension or ever positive Lebesgue measure. In this direction, we have the following conjecture.
	\begin{conj*}[Falconer's distance conjecture]
		Let $n\geq 2$ be an integer. Let $F\subset\mathbb{R}^n$ be a Borel set with $\Haus F>n/2.$ Then $D(F)$ has positive Lebesgue measure.
	\end{conj*}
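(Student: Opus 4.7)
The plan is to attack the conjecture through the group-theoretic framework of Definition \ref{Def1} combined with the tubular incidence philosophy flagged in the introduction. Fix $s>n/2$ and let $\mu$ be an $s'$-Frostman measure on $F$ with $n/2<s'<s$; the goal is to show that the distance measure $\nu$ with $k=2$ is absolutely continuous with respect to Lebesgue measure on $[0,\infty)$, which immediately forces $D(F)$ to have positive Lebesgue measure. Absolute continuity will follow if one can show $\nu$ has an $L^2$ density, which by Plancherel is the statement that $\int|\hat\nu(\omega)|^2\,d\omega$ is finite.

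The first concrete step is to apply Theorem \ref{HAR1} with $k=2$. That estimate gives
\[
\int_{B_{\delta^{-1}}(0)}|\hat\nu(\omega)|^2\,d\omega \;\lesssim\; \delta^{-n}\int_{\mathbb{O}(n)}\!\!\int \nu_g(B_{2.5\delta}(z))\,d\nu_g(z)\,dg,
\]
and the inner double integral is comparable, up to a constant depending on $n$, to $\|\nu_{g,\delta}\|_2^2$ where $\nu_{g,\delta}=\nu_g*\phi_\delta$. So the conjecture reduces to the averaged tubular-type estimate
\[
\mathcal{T}(\delta)\;:=\;\int_{\mathbb{O}(n)}\|\nu_{g,\delta}\|_2^2\,dg\;\lesssim\;\delta^{-n+\eta}\qquad(\exists\,\eta>0)
\]
for all small $\delta$. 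Expanding $\|\nu_{g,\delta}\|_2^2$ as a quadruple integral against $d\mu^{\otimes 4}$ and carrying out the $g$-integration via the coarea formula on $\mathbb{O}(n)$, the mass of $\{g:|u_1-u_2-g(v_1-v_2)|\lesssim\delta\}$ is roughly $\delta^{n-1}/|v_1-v_2|^{n-1}$ when $|u_1-u_2|\approx|v_1-v_2|$, which transforms $\mathcal{T}(\delta)$ into a weighted incidence count
\[
\mathcal{T}(\delta)\;\lesssim\;\delta^{n-1}\!\int\!\!\int\!\!\int\!\!\int \frac{\mathbf{1}_{\bigl||u_1-u_2|-|v_1-v_2|\bigr|\lesssim \delta}}{|v_1-v_2|^{n-1}}\,d\mu(u_1)d\mu(u_2)d\mu(v_1)d\mu(v_2).
\]
Geometrically this records, for a typical pair $(u_1,u_2)$, how much $\mu\times\mu$-mass lives in the $\delta$-shell of the sphere of radius $|u_1-u_2|$ about a base point — i.e.\ incidence between pairs and thickened spheres, which after polar decomposition is an incidence between pairs and thin annular tubes.

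The second stage is to produce the desired $\delta^{-n+\eta}$ bound on this incidence count assuming only $s'>n/2$. My plan would be a broad--narrow dichotomy on the separation $|u_1-u_2|\sim 2^{-j}$ combined with a refined spherical-average input in place of the raw Wolff--Erdogan bound of Theorem \ref{WE}. In the broad regime (well-separated pairs), one applies a polynomial partitioning step to the $(v_1,v_2)$-incidences with the thickened spheres, exploiting that the sphere family is a non-degenerate two-parameter family with curvature, and bootstraps the resulting cell/wall estimates through the Frostman property of $\mu$. In the narrow regime (clustered pairs) the restricted projection structure of the sphere-tangent directions permits an induction on scale, reducing to a lower-dimensional problem on $S^{n-1}$ where the Wolff--Erdogan exponent $\gamma_s$ can be upgraded. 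The $L^2$-averaging over $\mathbb{O}(n)$ plays the role of the transversality hypothesis that makes these tube/annulus incidence bounds effective without pointwise assumptions on $\nu_g$.

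The hard part, and the reason this remains a conjecture, is exactly the incidence bound above at the threshold $s'=n/2$: every known route (Wolff--Erdogan, decoupling, broad--narrow, polynomial partitioning) loses a $\delta^{-\eta(s)}$ factor that only vanishes when $s'$ is pushed well above $n/2$, and on current technology the sharp bound is intertwined with Kakeya-type maximal estimates as noted in Section \ref{dis}. So the genuine obstacle is not the reduction to $\mathcal{T}(\delta)$ — that part is soft and group-theoretic — but producing a tube/shell incidence inequality for a general $n/2$-Frostman measure that is sharp up to an $\epsilon$-loss. Any breakthrough on that incidence problem would, via the reduction sketched above, close the conjecture.
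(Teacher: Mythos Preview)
The statement you are addressing is Falconer's distance \emph{conjecture}, and the paper does not prove it; it is recorded in Section~\ref{BEFORE} purely as an open problem, with references to partial progress. There is therefore no ``paper's own proof'' to compare against.

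Your proposal is not a proof either, and you say so yourself. The reduction you outline --- passing from $\nu$ being $L^2$ to the averaged quantity $\int_{\mathbb{O}(n)}\|\nu_{g,\delta}\|_2^2\,dg$ and then to a spherical-shell incidence count --- is essentially the content of Theorem~\ref{HAR1} and Lemmas~\ref{GILP}, \ref{ENERGY} specialized to $k=2$, together with the standard Mattila-integral computation. That part is correct and is exactly what the paper develops. But the substantive claim, that the resulting incidence integral satisfies $\mathcal{T}(\delta)\lesssim\delta^{-n+\eta}$ for some $\eta>0$ whenever $s'>n/2$, is precisely the open problem. Your ``broad--narrow dichotomy'' and ``polynomial partitioning'' paragraphs are not arguments; they are names of techniques together with the hope that they can be made to work at the threshold. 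As you note, every known implementation of these ideas loses a power of $\delta$ that forces $s'$ strictly above $n/2$ (currently $s'>n/2+1/4$ in the plane via \cite{GIOW18}, and analogous thresholds in higher dimensions). Until someone supplies the sharp incidence or spherical-average estimate at $s'=n/2+\epsilon$, the conjecture remains open, and your write-up should be read as a roadmap, not a proof.
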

	
	See \cite{GIOW18},\cite{KS18},\cite{OP},\cite{S} for some recent results.
	
	A natural generalization of the distance set problem is to consider finite points configurations with more than two points, see \cite{GILP15} and the references therein. When $n=2, k=3$ we meet the problems considering `triangle sets in the plane'. This was studied in \cite{Y18}. In particular, if $\Haus F=s\in (0,2)$ one can show that the lower box dimension of $\Delta_3(F)$ is at least $3s/2.$ Unlike most of the results which follow from harmonic analytic methods, this $3s/2$ bound holds for $s<1$ as well. For distance sets ($k=2,n\geq 2$), one can obtain a similar result which says that the upper box dimension of $D(F)$ is at least $s/n.$ We note here that the $s/n$ bound is often strict, see \cite{FHY}.
	
	\subsection{Orthogonal group, Haar measure}
	For each integer $n\geq 2,$ we denote $\mathbb{O}(n)$ to be the orthogonal group of order $n$ over $\mathbb{R}.$ It can be represented by $n\times n$ real matrices $A$ with $A^T A=I.$ $\mathbb{O}(n)$ is a real compact Lie group of algebraic dimension $n(n-1)/2.$ We associate $\mathbb{O}(n)$ with the normalized Haar measure and we often write
	\[
	\int dg
	\]
	instead of 
	\[
	\int_{\mathbb{O}(n)}dg
	\]
	for simplicity.
	
	\subsection{Group-theoretic energy}
	Let $n\geq 2$ be an integer and $k\in\{2,\dots,n+1\}.$ Let $\mu\in\mathcal{P}(\mathbb{R}^n).$ For each $\delta>0, g\in\mathbb{O}(n)$ we define $k$-\emph{group-theoretic energy} for $\mu$ as scale $\delta$ with respect to $g$ to be
	\[
	E^k(\mu,g,\delta)=\mu^{2k}\{(x_1,\dots,x_k,y_1,\dots,y_k)\in\mathbb{R}^{2kn}: |(x_i-gy_i)-(x_j-gy_j)|\leq \delta\}.
	\]
	Often we can write $E(\mu,g,\delta)$ for $E^k(\mu,g,\delta)$ as the dependence on $k$ will be always assumed. If $A\subset\mathbb{R}^n$ is a finite set and $\mu$ is the normalized counting measure on $A.$ Let $k=2,\delta=0$ we see that
	\[
	E(\mu,g,0)=\mu^{4}\{(x_1,x_2,y_1,y_2)\in A^4: x_1-x_2=g(y_1-y_2) \},
	\]
	which counts the number of quadruples $(x_1,x_2,y_1,y_2)$ of $A$ such that $x_1-x_2=g(y_1-y_2).$ This idea was introduced in \cite{ES10} and it played a crucial role in Guth-Katz's proof of Erd\H{o}s' distance problem, see \cite{GK} and \cite[Section 9]{Guth}. We will discuss more about this in Section \ref{dis}.
	\subsection{AD-regular sets and measures}
	Let $n\geq 1$ be an integer. We say that a Borel measure $\mu\in\mathcal{P}(\mathbb{R}^n)$ is AD-regular with exponent $s\geq 0$ if there is a constant $C$ such that for each $x\in supp(\mu),$ the support of $\mu$, and $r>0$ we have
	\[
	C^{-1}r^s\leq \mu(B_r(x))\leq C r^s.
	\]
	We say that a compact set $F\subset\mathbb{R}^n$ is AD-regular if there is an AD-regular measure $\mu$ such that $\mu(K)>0.$ We will need these notions in Section \ref{dis}.
	\section{An $L^2$ approach to the Hausdorff dimension}\label{L2}
	
	\subsection{Some general results:}
	In this section, we discuss a simple method for estimating the Hausdorff dimension of a Borel probability measure $\mu$ in $\mathcal{P}(\mathbb{R}^n).$ We denote its Fourier transform as $\hat{\mu}.$ It is a continuous function as $\mu$ is compactly supported. In general it is not $L^2$, for otherwise $\mu$ is in fact an $L^2$ function. To measure how far away it is from being $L^2$ we take the following ball average, see also \cite[Section 3.8]{Ma2},
	\[
	A(\mu,R)=\int_{B_R(0)} |\hat{\mu}(\omega)|^2 d\omega.
	\]
	If $\lim_{R\to\infty} A(\mu,R)<\infty$ then $\mu$ can be viewed as an $L^2$ function. In general we expect that $A(\mu,R)$ tends to $\infty$ in a certain speed. If there is a constant $C>0$ and a number $s>0$ such that
	\[
	A(\mu,R)\leq CR^s
	\]
	for all $R>0,$ then we see that for $t\in (0,n)$
	\[
	I_t(\mu)=\int |\hat{\mu}(\omega)|^2|\omega|^{t-n}d\omega=\int_{|\omega|\leq 1} |\hat{\mu}(\omega)|^2|\omega|^{t-n}d\omega+\sum_{j\geq 0} \int_{2^j\leq |\omega|\leq 2^{j+1}}|\hat{\mu}(\omega)|^2|\omega|^{t-n}d\omega.
	\]
	Since $\mu$ is a probability measure, $\hat{\mu}$ is bounded on unit ball. Therefore we see that
	\[
	\int_{|\omega|\leq 1} |\hat{\mu}(\omega)|^2|\omega|^{t-n}d\omega<\infty.
	\]
	For each $j\geq 0$ we have
	\[
	\int_{2^j\leq |\omega|\leq 2^{j+1}}|\hat{\mu}(\omega)|^2|\omega|^{t-n}d\omega\leq A(\mu,2^{j+1})2^{j(t-n)}\leq C 2^{(j+1)s}2^{j(t-n)}=C2^{s} 2^{j(s+t-n)}.
	\]
	If $s+t-n<0$ the sum with respect to $j$ converges and we have
	\[
	I_t(\mu)<\infty.
	\]
	Therefore $\Haus \mu\geq t$ whenever $t<n-s.$ This implies that
	\[
	\Haus \mu\geq n-s.
	\]
	
	In order to study this $L^2$ phenomena more systematically we introduce the following notion of dimension,
	\[
	\dim_{L^2} \mu=n-\limsup_{R\to\infty}\frac{\log A(\mu,R)}{\log R}.
	\]
	There are several other ways of doing this $L^2$ approach. For example we can define
	\[
	A(\mu,R,h)=\int_{B(0,R)}|\hat{\mu}(\omega)|^2h(\omega)d\omega
	\]
	for a weight function $h$ on $\mathbb{R}^n.$ For example if we choose $h(\omega)=|\omega|^{-t}$ for a number $t\geq 0$ we see that
	\[
	A(\mu,R,h)\leq A(\mu,1,h)+\sum_{j\geq 0}\int_{|\omega|\in [2^j,2^{j+1}]} |\hat{\mu}(\omega)|^2 |\omega|^{-t} d\omega\leq A(\mu,1,h)+\sum_{j\geq 0,2^j\leq 2R} 2^{-jt}A(\mu,2^{j+1}).
	\]
	Thus, if $A(\mu,2^{j})\lesssim 2^{uj}$ then we see that
	\[
	\sum_{j\geq 0,2^j\leq 2R}  2^{-jt}A(\mu,2^{j+1})\lesssim\sum_{j\geq 0,2^j\leq 2R}  2^{-jt}2^{ju}\lesssim R^{u-t} 
	\]
	if $u-t>0$ or else the above sum is bounded uniformly for all $R.$ In terms of the $L^2$-dimension we see that if $\dim_{L^2} \mu>n-t$ then
	\[
	\sup_{R} A(\mu,R,h)<\infty,
	\]
	otherwise
	\[
	A(\mu,R,h)\lesssim R^{n-\dim_{L^2}\mu-t}.
	\]
	
	In general $A(\mu,R,|.|^{-t})$ could have a smaller growth exponent. It is interesting to find the infimum among all possible values $s$ such that
	\[
	A(\mu,R,|.|^{-t})\lesssim R^{s}
	\]
	holds for all $R>0.$ More precisely, we consider the following quantity
	\[
	\dim_{L^2,t}\mu=n-t-\limsup \frac{\log A(\mu,R,|.|^{-1})}{\log R}.
	\]
	For $t\geq 0$ we have
	\[
	\dim_{L^2,t}\mu\leq \dim_{L^2}\mu.
	\]
	In general,  it is possible that the above inequality is strict.     We have shown the following result.
	\begin{thm}
		Let $n\geq 1$ be an integer and $\mu\in\mathcal{P}(\mathbb{R}^n)$ be a Borel probaility measure. Then we have
		\[
		\Haus \mu\geq \dim_{L^2}\mu.
		\]
		The function $t\geq 0\to \dim_{L^2,t}\mu$ is non-increasing and bounded from above by $\dim_{L^2}\mu.$
	\end{thm}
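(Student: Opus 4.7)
The plan is to handle the two claims separately. Both are essentially refinements of the dyadic computation carried out in the paragraphs immediately preceding the theorem.

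For the first claim $\Haus\mu \geq \dim_{L^2}\mu$, I would simply repeat the preamble verbatim in a quantified form. Fix any $s > n - \dim_{L^2}\mu$ and any $t < n - s$. By the definition of $\dim_{L^2}\mu$ as a $\limsup$, we have $A(\mu,R) \leq C_s R^s$ for all $R$ sufficiently large; for bounded $R$ the estimate is trivial because $\hat{\mu}$ is continuous and $\mu \in \mathcal{P}(\mathbb{R}^n)$. Plugging this into the dyadic decomposition
\[
I_t(\mu) = \int_{|\omega|\leq 1} |\hat{\mu}(\omega)|^2 |\omega|^{t-n}\,d\omega + \sum_{j\geq 0} \int_{2^j\leq |\omega|\leq 2^{j+1}} |\hat{\mu}(\omega)|^2 |\omega|^{t-n}\,d\omega,
\]
the $j$-th annular contribution is bounded by $C_s 2^{(j+1)s} 2^{j(t-n)} \lesssim 2^{j(s+t-n)}$. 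Since $s+t-n < 0$ by choice, the geometric series converges, so $I_t(\mu) < \infty$ and $\Haus \mu \geq t$. Letting $s \searrow n - \dim_{L^2}\mu$ and $t \nearrow n - s$ gives $\Haus\mu \geq \dim_{L^2}\mu$.

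For the second claim, the key is a one-line pointwise comparison. Let $0 \leq t_1 \leq t_2$. For every $\omega \in B(0,R)$,
\[
|\omega|^{-t_1} = |\omega|^{-t_2}\cdot |\omega|^{t_2 - t_1} \leq R^{t_2 - t_1} |\omega|^{-t_2}.
\]
Integrating $|\hat{\mu}|^2$ against both sides over $B(0,R)$ yields
\[
A(\mu,R,|\cdot|^{-t_1}) \leq R^{t_2 - t_1}\, A(\mu,R,|\cdot|^{-t_2}).
\]
Taking logarithms, dividing by $\log R$ and passing to the $\limsup$, the polynomial growth exponents $f(t) := \limsup_{R\to\infty} \log A(\mu,R,|\cdot|^{-t})/\log R$ satisfy $f(t_1) - f(t_2) \leq t_2 - t_1$. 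Substituting into $\dim_{L^2,t}\mu = n - t - f(t)$ gives
\[
\dim_{L^2,t_1}\mu - \dim_{L^2,t_2}\mu = (t_2 - t_1) - (f(t_1) - f(t_2)) \geq 0,
\]
which is the required monotonicity. Specializing to $t_1 = 0$, where $|\cdot|^0 \equiv 1$ and hence $\dim_{L^2,0}\mu = \dim_{L^2}\mu$, yields $\dim_{L^2,t}\mu \leq \dim_{L^2}\mu$ for every $t \geq 0$.

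There is no real obstacle in either step: the first is essentially a restatement of the motivating calculation already given in the text, and the second reduces to the elementary weight bound $|\omega|^{-t_1} \leq R^{t_2-t_1}|\omega|^{-t_2}$ on $B(0,R)$. The only point requiring mild care is to formalize the passage from pointwise limsup inequalities for $A(\mu,R,|\cdot|^{-t})$ to the corresponding inequality for the dimension quantities, which is what the substitution in the previous paragraph accomplishes.
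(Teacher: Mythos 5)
Your proof is correct. For the first inequality $\Haus \mu\geq \dim_{L^2}\mu$ you reproduce, in properly quantified form, exactly the dyadic computation the paper carries out in the paragraphs immediately preceding the theorem, so there the two arguments coincide. The genuine difference is in the second claim. The paper's preceding discussion only establishes an \emph{upper} bound on the growth of $A(\mu,R,|\cdot|^{-t})$ in terms of the growth of $A(\mu,R)$ (namely $A(\mu,R,|\cdot|^{-t})\lesssim R^{\,n-\dim_{L^2}\mu-t}$ when this exponent is positive, and uniform boundedness otherwise), which translates into a \emph{lower} bound $\dim_{L^2,t}\mu\geq\min\{\dim_{L^2}\mu,\,n-t\}$; the inequality $\dim_{L^2,t}\mu\leq\dim_{L^2}\mu$ and the monotonicity in $t$ are then simply asserted rather than derived. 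Your pointwise comparison $|\omega|^{-t_1}\leq R^{t_2-t_1}|\omega|^{-t_2}$ on $B(0,R)$ supplies exactly the missing reverse estimate $A(\mu,R,|\cdot|^{-t_2})\geq R^{-(t_2-t_1)}A(\mu,R,|\cdot|^{-t_1})$, and hence an actual proof of both the monotonicity and, on taking $t_1=0$, the bound by $\dim_{L^2}\mu$; this is a small but real improvement over what is written in the paper. Two minor remarks: you are implicitly reading the weight in the definition of $\dim_{L^2,t}\mu$ as $|\cdot|^{-t}$ rather than the $|\cdot|^{-1}$ literally printed in the paper, which is clearly the intended reading; and for $t\geq n$ one has $A(\mu,R,|\cdot|^{-t})=\infty$ (since $\hat{\mu}(0)=1$), so $\dim_{L^2,t}\mu=-\infty$ and both claims hold vacuously there.
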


	In most cases, it is difficult to estimate $A(\mu,R)$ directly. A useful method is to consider the $L^2$-norm of $\mu_\delta=\mu*\phi_\delta.$ Notice that $\mu_\delta$ is a Schwartz function taking non-negative values. Since $\hat{\mu_\delta}=\hat{\mu}\hat{\phi_\delta}$ and $\hat{\phi_\delta}$ decays very fast outside the ball $B_{\delta^{-1}}(0)$ we see that
	\[
	A(\mu,\delta^{-1})\lesssim\|\mu_\delta\|^2_2=\int \mu^2_\delta(x)dx,
	\]
	where the implicit constant in $\lesssim$ depends only on the choice of the cutoff function $\phi.$
	
	\subsection{Wolff-Erdogan bound for finite points configurations: proof of corollary \ref{HAR2}}\label{APP}
	Before we prove Theorem \ref{HAR1}, let us see how to obtain a Hausdorff dimension estimate. Let $F\subset\mathbb{R}^n$ and $\Haus F=s.$ Then we can choose $(s-\epsilon)$-Frostman measure on $F$ for each $\epsilon>0.$ Then by Theorem \ref{HAR1} together with the discussions above we see that
	\[
	\Haus \Delta_k(F)\geq \frac{k(k-1)}{2}-(n-s)(k-1)+\gamma_s,
	\]
	provided that the RHS is not greater than $k(k-1)/2$, otherwise, $\Delta_k(F)$ has positive Lebesgue measure. For $k=2,$ this result revisits the Wolff-Erdogan-Mattila's bound for the Hausdorff dimension of distance set.
	\section{GILP's lemma and an energy integral estimate}
	First, we introduce a lemma obtained in \cite{GILP15}. 
	\begin{lma}\label{GILP}
		Let $n\geq 2$ and $k\in\{2,\dots,n+1\}$ be integers. Let $\mu\in\mathcal{P}{[0,1]^n}$ and $\nu_g,\nu$ as defined before.  Then there is a constant $C>0$ and we have for all $\delta>0$
		\[
		\int \nu^2_\delta(z)dz\leq C \delta^{-n(k-1)} \int E(\mu,g,\delta)dg,
		\]
		where $\nu_\delta=\nu*\phi_\delta$ is the smoothed version of $\nu$ with scale $\delta>0$ and $E(\mu,g,\delta)$ is the group-theoretic energy of $\mu$ with scale $\delta>0.$
	\end{lma}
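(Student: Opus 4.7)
The plan is to reduce the bound to a pointwise lower estimate on the Haar measure of a set of admissible rotations. Set $d = k(k-1)/2$. With $\phi$ chosen even, $\phi_\delta \ast \phi_\delta$ on $\mathbb{R}^d$ is nonnegative, bounded by $C\delta^{-d}$, and supported in an $O(\delta)$-ball, so unfolding the square yields
\[
\int \nu_\delta^2(z)\,dz = \iint (\phi_\delta \ast \phi_\delta)(s-s')\,d\nu(s)\,d\nu(s') \;\lesssim\; \delta^{-d}\, T(\delta),
\]
where $T(\delta) := \mu^{2k}\{(x,y):\;\bigl||x_i-x_j|-|y_i-y_j|\bigr|\le C_0\delta \text{ for all } 1\le i<j\le k\}$. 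By Fubini, setting $G_\delta(x,y) := \{g\in\mathbb{O}(n): |(x_i-x_j)-g(y_i-y_j)|\le\delta\ \forall\, i<j\}$,
\[
\int E(\mu,g,\delta)\,dg = \int d\mu^{2k}(x,y) \int_{G_\delta(x,y)} dg.
\]
It therefore suffices to establish the following geometric claim: for every $(x,y)\in([0,1]^n)^{2k}$ satisfying $\bigl||x_i-x_j|-|y_i-y_j|\bigr|\le C_0\delta$ for all $i<j$, we have $\int_{G_{C_1\delta}(x,y)} dg \;\ge\; c\,\delta^{n(k-1)-d}$, with $c,C_1>0$ depending only on $n,k$. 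Granted this, $\int E(\mu,g,C_1\delta)\,dg \gtrsim \delta^{n(k-1)-d}\, T(\delta)$ and the two displays combine to give $\int\nu_\delta^2\,dz \lesssim \delta^{-n(k-1)}\int E(\mu,g,\delta)\,dg$ after a harmless rescaling of $\delta$.

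I would prove the geometric claim by induction on $k$. After translating so $x_1 = y_1 = 0$ and setting $u_i = x_i$, $v_i = y_i$ for $i \ge 2$, the condition defining $G_\delta$ becomes (up to a constant factor in $\delta$) $|u_i - g v_i| \le \delta$ for all $i \ge 2$. For $k = 2$: the hypothesis $\bigl||u_2|-|v_2|\bigr|\le C_0\delta$ together with $|v_2|\le\sqrt n$ forces $g v_2$ into a spherical cap on the sphere of radius $|v_2|$ of angular radius $\sim \delta/|v_2|$; since the stabilizer of a direction in $\mathbb{O}(n)$ is a copy of $\mathbb{O}(n-1)$, the admissible set has Haar measure $\gtrsim (\delta/|v_2|)^{n-1}\gtrsim\delta^{n-1}=\delta^{n(k-1)-d}$. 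For the inductive step, first fix $g v_2$ within $\delta$ of $u_2$ at cost $\gtrsim \delta^{n-1}$; the residual freedom in $g$ is $\mathbb{O}(v_2^\perp)\cong\mathbb{O}(n-1)$, and the remaining conditions $|u_i - g v_i|\le\delta$ for $i\ge 3$ reduce, via the Pythagorean decomposition along $v_2$ and $v_2^\perp$, to conditions of the same form on the perpendicular components $\{u_i^\perp, v_i^\perp\}_{i\ge 3}\subset\mathbb{R}^{n-1}$. The matching-distance hypothesis passes to its perpendicular counterpart up to $O(\delta)$ error, because the inner products $\langle v_i, v_2\rangle$, $\langle u_i, u_2\rangle$ are determined by the pairwise distances and lengths. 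The induction hypothesis in dimension $n - 1$ with $k - 1$ vectors then yields a further factor $\delta^{(n-1)(k-2)-(k-1)(k-2)/2}$, and the exponents telescope to $(k-1)(2n-k)/2 = n(k-1) - d$.

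The main obstacle is uniformity of the bound in the geometric claim, in particular across near-degenerate configurations where $v_2,\ldots,v_k$ nearly fail to span a $(k-1)$-dimensional subspace. Fortunately, degeneracy only improves the estimate: if $v_2,\ldots,v_k$ span only an $m$-dimensional subspace with $m < k-1$, the stabilizer $\mathbb{O}(n-m)$ is larger than $\mathbb{O}(n-k+1)$, and a direct count gives $\int_{G_\delta}\!dg \gtrsim \delta^{\frac{n(n-1)}{2} - \frac{(n-m)(n-m-1)}{2}}$, with exponent strictly less than $n(k-1)-d$, hence a strictly stronger (larger) lower bound. Along the inductive telescoping, the $i$-th step contributes $\gtrsim \min\bigl(1,(\delta/|v_i^\perp|)^{n-i+1}\bigr)\gtrsim\delta^{n-i+1}$ uniformly in $|v_i^\perp|\le\sqrt n$, so the generic and degenerate regimes are handled by a single estimate.
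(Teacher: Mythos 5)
Your reduction is the right shape, and the geometric claim you isolate is indeed what a pointwise argument would need; it is also true for $k=2$, and your exponent count $\delta^{(n-1)+(n-2)+\cdots+(n-k+1)}=\delta^{n(k-1)-d}$ is correct for nondegenerate, well-separated configurations. But the claim is false for $k\geq 3$, and the failure mode is not the one you guard against. Take $n=2$, $k=3$, $x_1=y_1=(0,0)$, $x_2=y_2=(1,0)$, $y_3=(1/2,0)$ and $x_3=(1/2,\sqrt{\delta})$. All pairwise distances of $x$ and of $y$ agree to within $\delta$, so this pair lies in the set defining $T(\delta)$. Yet $G_{C_1\delta}(x,y)=\emptyset$ for every fixed $C_1$ once $\delta$ is small: the condition $|(x_1-x_2)-g(y_1-y_2)|\le C_1\delta$ forces $|ge_1-e_1|\le C_1\delta$, hence $g(y_1-y_3)=-\tfrac12 ge_1$ lies within $\tfrac12 C_1\delta$ of $(-1/2,0)$, while $x_1-x_3=(-1/2,-\sqrt{\delta})$ is at distance $\sqrt{\delta}\gg\delta$ from that point (the same holds for reflections). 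So the fiber is empty, not merely small, and the inequality $\int E(\mu,g,C_1\delta)\,dg\gtrsim\delta^{n(k-1)-d}T(\delta)$ cannot be obtained configuration by configuration.

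Your paragraph on degeneracy addresses a different question: it lower-bounds the fiber \emph{assuming a suitable $g$ exists}, via the size of the stabilizer. The actual obstruction is existence. Matching pairwise distances to accuracy $\delta$ only yields an approximate congruence to accuracy $\sqrt{\delta}$ near degenerate simplices, because the map sending a point to its distances from an affine frame has degenerate Jacobian there; the same issue resurfaces inside your induction, where $|u_i^{\perp}|$ and $|v_i^{\perp}|$ agree only to $O(\delta/|v_i^{\perp}|)$ rather than $O(\delta)$. So for $k\ge 3$ the lemma cannot be proved by a per-configuration fiber bound, and some genuinely different mechanism is needed to control the near-degenerate pairs. (The paper itself does not supply a proof to compare against: it defers entirely to \cite[Section 2]{GILP15}.)
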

	\begin{proof}
		A proof can be found in \cite[Section 2]{GILP15}. 
	\end{proof}

	\begin{lma}\label{ENERGY}
		Let $n\geq 2$ and $k\in\{2,\dots,n+1\}$ be integers. Let $\mu\in\mathcal{P}{[0,1]^n}$ and $\nu_g,\nu$ as defined before. Then for each $\delta>0,g\in\mathbb{O}(n)$ we have
		\[
		E(\mu,g,\delta)\leq \int \nu_g^{k-1}(B_{2.5\delta}(z))d\nu_g(z).
		\]
	\end{lma}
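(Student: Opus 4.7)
The plan is to reinterpret the energy $E(\mu,g,\delta)$ as a $\nu_g^k$-measure of a pairwise-closeness set in $(\mathbb{R}^n)^k$. First I would introduce new variables $z_i = x_i - g y_i$ for $i = 1, \dots, k$. Unwinding Definition \ref{Def1}, the pushforward of $\mu \times \mu$ under the map $(u,v) \mapsto u - g v$ is precisely $\nu_g$; since the $k$ pairs $(x_i, y_i)$ are independent under $\mu^{2k}$, the pushforward of $\mu^{2k}$ under the componentwise map $(x_1,y_1,\dots,x_k,y_k) \mapsto (z_1,\dots,z_k)$ is the product measure $\nu_g^k$. Under this substitution the defining inequality $|(x_i - g y_i) - (x_j - g y_j)| \leq \delta$ becomes $|z_i - z_j| \leq \delta$, so
$$E(\mu,g,\delta) = \nu_g^k\bigl\{(z_1,\dots,z_k) : |z_i - z_j| \leq \delta \text{ for all } 1 \leq i < j \leq k\bigr\}.$$

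Next I would relax the pairwise condition to an anchored one. Trivially, any $k$-tuple with all pairwise distances at most $\delta$ satisfies $z_i \in B_\delta(z_1)$ for $i = 2,\dots,k$. Applying Fubini's theorem and integrating $z_2,\dots,z_k$ before $z_1 = z$ yields
$$E(\mu,g,\delta) \leq \int \nu_g(B_\delta(z))^{k-1}\, d\nu_g(z) \leq \int \nu_g^{k-1}(B_{2.5\delta}(z))\, d\nu_g(z),$$
where the second inequality uses only the trivial inclusion $B_\delta(z) \subset B_{2.5\delta}(z)$.

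There is no genuine obstacle here: the argument is a change of variables followed by a triangle-style containment, and the sharper bound with $B_\delta$ in place of $B_{2.5\delta}$ already holds. The looser radius in the statement is presumably recorded for compatibility with the application in conjunction with Lemma \ref{GILP}, where the sharp indicator of $B_\delta$ will ultimately be replaced by a smoothed bump at scale $\delta$ whose support has radius comparable to $2\delta$, and the extra slack to $2.5\delta$ absorbs this enlargement.
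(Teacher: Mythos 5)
Your argument is correct, and it actually yields a slightly sharper conclusion than the paper's. The paper's proof rests on the same two ideas---relax the pairwise condition to the anchored condition $|z_i-z_1|\leq\delta$ for $i\geq 2$, then apply Fubini---but it works in the original variables $(x_i,y_i)$, peeling off one pair at a time, and it treats the identity $\int f(u-gv)\,d\mu(u)\,d\mu(v)=\int f\,d\nu_g$ as available only for continuous $f$, since that is how $\nu_g$ is introduced in Definition \ref{Def1}. Because the relevant $f$ is the indicator of a ball and $\nu_g$ is not assumed to give zero mass to spheres, the paper sandwiches the indicator between continuous bumps and loses a factor in the radius at each of the two places this occurs: first $\delta\to 2\delta$ when bounding the section over $(x_k,y_k)$ by $\nu_g(B_{2\delta}(x_1-gy_1))$, and then $2\delta\to 2.5\delta$ when converting the outer integral against $d\mu(x_1)\,d\mu(y_1)$ into one against $d\nu_g(z)$. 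You instead observe that $\nu_g$ is (by Riesz representation) exactly the pushforward of $\mu\times\mu$ under $(u,v)\mapsto u-gv$, hence $\nu_g^k$ is the pushforward of $\mu^{2k}$ under the componentwise map, so the defining identity extends to all bounded Borel $f$ and Tonelli applies directly to the indicator of the closed set $\{|z_i-z_1|\leq\delta,\ i\geq 2\}$. This bypasses the continuity issue the paper's proof is built around and shows the lemma already holds with $\overline{B}_\delta(z)$ in place of $B_{2.5\delta}(z)$ (note the closed ball, since the constraint is $\leq\delta$; your $B_\delta$ should strictly be $\overline{B}_\delta$, which is harmless here). Your reading of the $2.5\delta$ as slack retained for compatibility with the smoothing conventions used alongside Lemma \ref{GILP} is accurate.
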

	\begin{proof}
		By putting in definitions we see that the statement of this lemma is equivalent to
		\[
		\mu^{2k}\{(x_1,\dots,x_k,y_1,\dots,y_k)\in\mathbb{R}^{2kn}: |(x_i-gy_i)-(x_j-gy_j)|\leq \delta,1\leq i<j\leq k\}\leq \int \nu^{k-1}_g(B_{2\delta}(z))d\nu_g(z).
		\]
		To prove this, let $x_1,\dots,x_{k-1},y_1,\dots,y_{k-1}$ be fixed, consider the following section
		\[
		\{(x_k,y_k): |(x_i-gy_i)-(x_j-gy_j)|\leq \delta,1\leq i<j\leq k\}.
		\]
		It is easy to see that the above section is contained in
		\[
		E=\{(x_k,y_k): |(x_k-gy_k)-(x_1-gy_1)|\leq \delta\}.
		\]
		We see that the $\mu^{2k}$ measure is now bounded from above by
		\[
		\mu^{2(k-1)}\{(x_1,\dots,x_{k-1},y_1,\dots,y_{k-1})\in\mathbb{R}^{2(k-1)n}: |(x_i-gy_i)-(x_j-gy_j)|\leq \delta,1\leq i<j\leq k-1\}\times
		\]
		\[
		\int 1_E(x_k,y_k)d\mu(x_k)d\mu(y_k).
		\]
		Observe that $1_E(x_k,y_k)=f(x_k-gy_k)$ for $f:z\in\mathbb{R}^n\to f(z)=1_{\{a:|a-(x_1-gy_1)|\leq \delta\}}(z).$ By the definition of $\nu_g$ we see that
		\[
		\int 1_E(x_k,y_k)d\mu(x_k)d\mu(y_k)\leq\nu_g(B_{2\delta}(x_1-gy_1)).
		\]
		If $\nu_g$ does not give positive measure on any spheres then we would get
		\[
		\int 1_E(x_k,y_k)d\mu(x_k)d\mu(y_k)=\nu_g(B_{\delta}(x_1-gy_1)).
		\]
		However, we do not assume this continuity of $\nu_g$ and we only have an upper bound.    We can do the above step $k-1$ times and by Fubini's theorem we see that 
		\[
		\mu^{2k}\{(x_1,\dots,x_k,y_1,\dots,y_k)\in\mathbb{R}^{2kn}: |(x_i-gy_i)-(x_j-gy_j)|\leq \delta,1\leq i<j\leq k\}
		\]
		\[
		\leq \int \nu^{k-1}_g(B_{2\delta}(x_1-gy_1))d\mu(x_1)d\mu(y_1)\leq\int \nu^{k-1}_g(B_{2.5\delta}(z))d\nu_g(z).
		\]
		If $\nu_g(B_{2\delta}(.))$ would be continuous then we would have
		\[ \int \nu^{k-1}_g(B_{2\delta}(x_1-gy_1))d\mu(x_1)d\mu(y_1)=\int \nu^{k-1}_g(B_{2\delta}(z))d\nu_g(z).
		\]
		In general, we choose a continuous function sandwiched by $\nu_g(B_{2\delta}(.))$ and $\nu_g(B_{2.5\delta}(.))$ (by taking convolution with a suitable smooth cutoff function) then apply the definition of $\nu_g$ to arrive at the above inequality.
	\end{proof}
	\section{The main result}
	In this section we give a detailed proof of Theorem \ref{HAR1}. We note that in \cite{GILP15}, a proof is given under the condition that $\nu_g$ is absolutely continuous for almost all $g\in\mathbb{O}(n)$. In \cite{Y18}, a sketched proof is given for the case when $k=3$ and we note that the same strategy works for general cases $k\geq 2$ as well and here we will provide more details.
	
	\begin{proof}[Proof of Theorem \ref{HAR1}]
		By Lemma \ref{GILP} and \ref{ENERGY} we see that as $\delta \to 0,$
		\[
		\int\nu^2_\delta(z)dz\leq C \delta^{-n(k-1)}\int \int \nu_g^{k-1}(B_{2.5\delta}(z))d\nu_g dg,
		\]
		where $C>0$ is a constant. The situation would be simple if $\nu_g(B_{2.5\delta}(z))$ would be continuous with respect to $z.$ However, we can not assume this continuity condition. To deal with this issue, let $\phi^{DD}(.)$ be a radial Schwartz function such that $\hat{\phi}^{DD}$ is real valued, non-negative,  vanishes outside the ball of radius $0.5c''>0$ around the origin and is equal to a positive number $c>0$ on a ball of radius $c'>0$ around the origin. Now we take the square $\phi^{D}=(\phi^{DD})^2$ and see that
		\[
		\hat{\phi}^{D}=\hat{\phi}^{DD}*\hat{\phi}^{DD}.
		\]
		We see that $\hat{\phi}^{D}$ is real valued, non-negative, vanishes outside the ball of radius $c''$ around the origin. Unlike $\hat{\phi}^{DD}$, $\hat{\phi}^{D}$ is no longer a constant function on any ball centred at the origin. By further rescaling if necessary, we may assume that $\phi^D(x)\geq 1$ for $x\in B_{2.5}(0).$ This can be done because $\phi^D$ is real valued, Schwartz and $\phi^D(0)>0.$ Since $\hat{\phi}^D$ is compactly supported, we can denote $c''''=\|\hat{\phi}^D\|_\infty.$ Then we write $h_{g,\delta}=\nu_g*\phi^D(\delta^{-1}.).$ We see that
		\[
		\nu_g(B_{2.5\delta}(z))=\int_{B_{2.5\delta}(z)}d\nu_g(x)\leq \int \phi^D((z-x)/\delta)d\nu_g(x)=h_{g,\delta}(z).
		\]
		Now we write $f_{g,\delta}(.)=\delta^{-n}h_{g,\delta}(.),$ as a result we see that
		\[
		\int\nu^2_\delta(z)dz\lesssim \int\int f^{k-1}_{g,\delta}(z)d\nu_g(z)dg.
		\]

		Let $\psi$ be a smooth cutoff function supported in $\{\omega\in\mathbb{R}^n: |\omega|\in [0.5,4] \}$ and identically equal to $1$ in $\{\omega\in\mathbb{R}^n: |\omega|\in [1,2] \}.$ We can also require that $\sum_{j\in\mathbb{Z}} \psi(2^{-j} \omega)=1$ and this is the starting point of the Littlewood-Paley decomposition. Let $f_{g,\delta,j}, \nu_{g,j}$ be the $j$-th Littlewood-Paley piece of $f_{g,\delta},\nu_{g}$ respectively, namely, $\hat{f}_{g,\delta,j}(\omega)=\hat{f}_{g,\delta}(\omega)\psi(2^{-j}\omega)$ and similarly for $\nu_{g,j}.$ We need to bound $\|f_{g,\delta,j}\|_{\infty}$ as well as $\|\nu_{g,j}\|_{\infty}.$ The later can be bounded by
		$
		C'2^{j(n-s)}
		$
		for any $s<\Haus F$ with a constant $C'$ depending on the function $\psi$. This was shown in \cite[page 805]{GILP15}. For the former, we will be interested in estimating $\|f_{g,\delta,j}\|_\infty$ when $2^j$ is not as large as $\delta^{-1}.$ In this case, recall that $f_{g,\delta}=\nu_g*\phi^D_\delta$ and in terms of Fourier transform we have
		\[
		\hat{f}_{g,\delta,j}=\hat{\nu}_g \hat{\phi^D_\delta} \psi(2^{-j}.)
		\]
		Recall that $\phi^D_\delta(.)=\delta^{-n}\phi^D(./\delta)$, therefore we have $\hat{\phi^D_\delta}(.)=\hat{\phi^D}(\delta.).$ Then we see that 
		\begin{eqnarray*}
			\|f_{g,\delta,j}\|_\infty&\leq& \|\hat{f}_{g,\delta,j}\|_1\\
			&\leq& c'''' \int |\hat{\nu}_g(\omega)  \psi(2^{-j}\omega)|d\omega\\
			&\leq& c''''\int_{B_{2^{j+2}}(0)} |\hat{\mu}(\omega)\hat{\mu}(g\omega)|d\omega\\
			&\leq& c''''\sqrt{\int_{B_{2^{j+2}}(0)} |\hat{\mu}(\omega)|^2d\omega \int_{B_{2^{j+2}}(0)} |\hat{\mu}(g\omega)|^2d\omega}.
		\end{eqnarray*}
		By the discussion in \cite[Section 3.8]{Ma2} we see that
		\[
		\int_{B_{2^{j+1}}(0)} |\hat{\mu}(\omega)|^2d\omega\lesssim 2^{(j+2)(n-s)}.
		\]
		The same estimate holds for $\int_{B_{2^{j+2}}(0)} |\hat{\mu}(g\omega)|^2d\omega$ as well. Therefore we see that
		\[
		\|f_{g,\delta,j}\|_\infty\leq C'2^{j(n-s)}
		\]
		where $C'>0$ is a constant which does not depend on $g,j,\delta.$ Observe that if $2^{j-1}>c''\delta^{-1}$ then $f_{g,\delta,j}=0$ and this is the reason for considering $2^j$ to be not much larger than $\delta^{-1}.$ Thus, we have obtained a complete estimate for $\|f_{g,\delta,j}\|_\infty.$

		In what follows we want to estimate the following integral,
		\[
		\int f^{k-1}_{g,\delta}(z)d\nu_g(z)\tag{*}.
		\]
		We want to apply the argument in \cite[Section 3]{GILP15} and we provide details depending on whether $k=2$ or $k\geq 3.$ We note here that the argument in \cite[Section 3]{GILP15} works only for $k\geq 3$ but we shall extend it to the case when $k=2.$ 
		
		\subsection{Case $k=2$:} In this situation, the equation $(*)$ can be written as
		\[
		\int f_{g,\delta}d\nu_g.
		\]
		We can apply \cite[Formula (3.27)]{Ma2} and as a result we see that
		\[
		\int f_{g,\delta}(z)d\nu_g(z)=\int \hat{\nu}_g(\omega) \overline{\hat{\overline{f}}}_{g,\delta}(\omega)d\omega.
		\]
		We note here that $\overline{\hat{\overline{f}}}_{g,\delta}(\omega)=\hat{f}_{g,\delta}(-\omega).$ 
		Therefore we see that
		\[
		\int\int f_{g,\delta}(z)d\nu_g(z)dg= \sum_{j\in\mathbb{Z}}\int\int \hat{f}_{g,\delta}(-\omega)\hat{\nu}_{g}(\omega)\psi(2^{-j}\omega)d\omega dg.
		\]
		Recall that $f_{g,\delta}=\nu_g* \phi^D_\delta$ we see that
		\[
		\hat{f}_{g,\delta}=\hat{\nu}_g \hat{\phi^D_\delta}.
		\]
		Then since $\nu_g=\mu*g\mu$ we see that
		\[
		\hat{\nu}_g(\omega)=\hat{\mu}(\omega)\hat{\mu}(g\omega).
		\]
		As a result we see that
		\[
		\int\int \hat{f}_{g,\delta}(-\omega)\hat{\nu}_{g}(\omega)\psi(2^{-j}\omega)d\omega dg=\int\int |\hat{\mu}(\omega)|^2|\hat{\mu}(g\omega)|^2 \hat{\phi^D_\delta}(\omega)\psi(2^{-j}\omega)d\omega dg.
		\]
		Observe that $\hat{\phi^D_\delta}$ is a cutoff function at scale $\delta^{-1}.$ More precisely, for $|\omega|>c''\delta^{-1}$ we have
		\[
		\hat{\phi^D_\delta}(\omega)=0.
		\]
		By integrating first with respect to $dg$ and then $d\omega$ we see that
		\[
		\int\int |\hat{\mu}(\omega)|^2|\hat{\mu}(g\omega)|^2 \hat{\phi^D_\delta}(\omega)\psi(2^{-j}\omega)d\omega dg=C(n) \int \left(\int_{S^{n-1}} |\hat{\mu}(t\sigma)|^2d\sigma\right)^2 \psi(2^{-j}t)\hat{\phi}^D_\delta(t) t^{n-1}  dt,\tag{**}
		\]
		where $d\sigma$ is the Lebesgue probability measure on $S^{n-1}$. We write $\hat{\phi^D_\delta}(t)=\hat{\phi^D_\delta}(\omega)$ for $|\omega|=t$ and similarly for $\psi(2^{-j}t).$ Since $\psi$ and $\phi^D_\delta$ are radial functions, the above step is well-defined. The constant $C(n)$ is a positive number which depends only on $n$.
		
		We sill need to sum $(**)$ over $j\in\mathbb{Z}.$ Because of the cutoff property of $\phi^D_\delta$ we only need to consider the sum up to $\sum_{j: 2^j\leq 2c''\delta^{-1}}.$ More precisely, there is a positive constant $C''>0$ and we have
		\[
		\int\int f_{g,\delta}(z)d\nu_g(z)dg\leq C'' \sum_{j: 2^j\leq 2c''\delta^{-1}} \int \left(\int_{S^{n-1}} |\hat{\mu}(t\sigma)|^2d\sigma\right)^2 \psi(2^{-j}t) t^{n-1}  dt.
		\]
		In fact when $2^j>2c''\delta^{-1}$ then $\psi(2^{-j}\omega)\hat{\phi^D_\delta}(\omega)=0.$ Therefore we do not need to sum larger values of $j.$ This is because we can choose a special cutoff function $\phi^D$ whose Fourier transform is compactly supported. This makes $\phi^D$ not compactly supported but we do not need this. We still need to sum negative values of $j$ but as $\mu$ is a probability measure we have
		\[
		\sum_{j\leq 0}\int \left(\int_{S^{n-1}} |\hat{\mu}(t\sigma)|^2d\sigma\right)^2 \psi(2^{-j}t) t^{n-1}  dt\leq C''' \int_{[0,2]} t^{n-1} dt<\infty,
		\]
		for a positive constant $C'''>0.$ We can use Theorem \ref{WE}(Wolff-Erdogan). For all $\epsilon>0$ there is a constant $C_\epsilon>0$ such that for each $t>0$ we have
		\[
		\int_{S^{n-1}} |\hat{\mu}(t\sigma)|^2d\sigma\leq C_\epsilon t^{-\gamma_s+\epsilon}.
		\]
		We can insert one of the factor $\int |\hat{\mu}(t\sigma)|^2d\sigma$ into $(**)$ and we see that for each $\epsilon>0,$
		\[
		\int \left(\int_{S^{n-1}} |\hat{\mu}(t\sigma)|^2d\sigma\right)^2 \psi(2^{-j}t) t^{n-1}  dt\lesssim \int_{2^{j-1}}^{2^{j+2}}\left(\int |\hat{\mu}(t\sigma)|^2d\sigma\right) t^{n-1}t^{-\gamma_s+\epsilon}dt.
		\]
		Then we see that
		\[
		\sum_{j:1\leq 2^{j}\leq 2c''\delta^{-1}}\int \left(\int_{S^{n-1}} |\hat{\mu}(t\sigma)|^2d\sigma\right)^2 \psi(2^{-j}t) t^{n-1}  dt\lesssim \int_{0}^{4c''\delta^{-1}} \left(\int |\hat{\mu}(t\sigma)|^2d\sigma\right) t^{n-1}t^{-\gamma_s+\epsilon}dt.
		\]
		Up to a multiple constant the RHS above is equal to
		\[
		\int_{|\omega|\leq 4c''\delta^{-1}} |\hat{\mu}(\omega)|^2 |\omega|^{-\gamma_s+\epsilon} d\omega.
		\]
		If $I_{n-\gamma_s-\epsilon}(\mu)<\infty$ then the above integral is bounded uniformly for $\delta\to 0,$ in this case $D(F)$ would have positive Lebesgue measure. Therefore we consider the case when $n-\gamma_s-s+\epsilon>0.$ By the discussions in Section \ref{L2} we see that
		\[
		\int_{|\omega|\leq 4c''\delta^{-1}} |\hat{\mu}(\omega)|^2 |\omega|^{-\gamma_s+\epsilon} d\omega\lesssim \delta^{-\dim_{L^2,\gamma_s+\epsilon}\mu}\leq \delta^{-(n-\gamma_s-s+\epsilon)}.
		\]
		For the rightmost inequality we need the fact that $\mu$ is an $s$-Frostman measure. Thus, we showed that
		\[
		\int \nu^2_{\delta}(z)dz\lesssim \int\int f_{g,\delta}(z)d\nu_g(z)dg\lesssim \delta^{-(n-\gamma_s-s+\epsilon)}.
		\]
		This concludes the case when $k=2.$
		
		\subsection{Case $k\geq 3$:} We need to estimate the following integral
		\[
		\int f^{k-1}_{g,\delta}(z)d\nu_g(z).
		\]    
		We see that
		\begin{eqnarray*}
			\int f^{k-1}_{g,\delta}(z)d\nu_g(z)&\overset{\text{\cite[Formula (3.27)]{Ma2}}}{=}& \int \hat{\nu}_g(\omega) \overline{\hat{\overline{f^{k-1}}}}_{g,\delta}(\omega)d\omega\\
			&=& \int(\hat{f}_{g,\delta}\overset{(k-1)-\text{times}}{*\dots*}\hat{f}_{g,\delta})(-\omega) \hat{\nu}_{g}(\omega)d\omega
		\end{eqnarray*}
		We write the Littlewood-Paley decompositions $\hat{\nu}_{g}=\sum_{j\in\mathbb{Z}}\hat{\nu}_{g,j}$ and $\hat{f}_{g,\delta}=\sum_{j\in\mathbb{Z}}\hat{f}_{g,\delta,j}.$ Then we see that
		\[
		\int(\hat{f}_{g,\delta}\overset{(k-1)-\text{times}}{*\dots*}\hat{f}_{g,\delta})(-\omega) \hat{\nu}_{g}(\omega)d\omega=\sum_{j_1,j_2,\dots,j_k}\int(\hat{f}_{g,\delta,j_1}\overset{(k-1)-\text{times}}{*\dots*}\hat{f}_{g,\delta,j_{k-1}})(-\omega) \hat{\nu}_{g,j_k}(\omega)d\omega.
		\]
		Denote $j^*=\max\{j_1,\dots,j_{k-1}\}$. We see that $\hat{f}_{g,\delta,j_1}\overset{(k-1)-\text{times}}{*\dots*}\hat{f}_{g,\delta,j_{k-1}}$ is supported on an annulus. We can estimate the location of this annulus. First, each term of form $\hat{f}_{g,\delta,j}$ is supported on an annulus with inner radius $2^{j-1}$ and outer radius $2^{j+2}.$ Thus $\hat{f}_{g,\delta,j_1}\overset{(k-1)-\text{times}}{*\dots*}\hat{f}_{g,\delta,j_{k-1}}$ is supported on an annulus with inner radius at least $2^{j^*-1}$ and outer radius at most $(k-1)2^{j^*+2}.$ Thus, if either $2^{j_1+2}<2^{j^*-1}$ or $2^{j_1-1}> (k-1)2^{j^*+2}$ we see that
		\[
		\int f_{g,\delta,j_1}(z)\dots f_{g,\delta,j_{k-1}}(z)\nu_{g,j_k}(z)dz=\int(\hat{f}_{g,\delta,j_1}\overset{(k-1)-\text{times}}{*\dots*}\hat{f}_{g,\delta,j_{k-1}})(-\omega) \hat{\nu}_{g,j_k}(\omega)d\omega=0.
		\]
		For this reason we only need to sum the terms indexed by $j_1,\dots,j_k$ with $|j^*-j_k|\leq C(k)$ for a constant $C(k)$ depending only on $k.$ Let $j$ be any integer and we sum all the terms $j_1,\dots,j_k$ with $|j_1-j|\leq C(k)/2$ and $|j^*-j|\leq C(k)/2.$ The resulting sum is bounded from above by a constant (depending on $k$) times the following expression
		\[
		2^{j(n-s)(k-2)}\int \left|\sum_{|q|\leq C(k)/2} f_{g,\delta,j+q}(z)\right|\left|\sum_{|q|\leq C(k)/2} \nu_{g,j+q}(z)\right|dz.\tag{***}
		\]
		We need to sum the above expression for $j\in\mathbb{Z}.$ If $2^{j-C(k)/2}\leq 2c''\delta^{-1}$, then $(***)$ is bounded from above by
		\[
		c''''2^{j(n-s)(k-2)}\int \left|\sum_{|q|\leq C(k)/2} \nu_{g,j+q}(z)\right|^2dz.
		\]
		Here we used Cauchy-Schwartz inequality, Plancherel's theorem as well as the fact that $\|\hat{\phi}^D\|_\infty=c''''.$
		If $2^{j-C(k)/2}>2c''\delta^{-1}$, then $(***)$ is equal to $0.$
		In all, the sum for $j\in\mathbb{Z}$ of $(***)$ can be bounded from above by
		\[
		c''''\sum_{2^j\leq 2c''\delta^{-1}} 2^{j(n-s)(k-2)}\int \left|\sum_{|q|\leq C(k)/2} \nu_{g,j+q}(z)\right|^2dz.
		\]
		It is easy to check that the sum with $j\leq 0$ gives another constant $C(k,s,\nu)$ depending on $k,s$ and $\nu$. Then we can summarize our results so far in the following inequality,
		\[
		\int f^{k-1}_{g,\delta}(z)d\nu_g(z)\lesssim \sum_{1\leq 2^j\leq 2c''\delta^{-1}} 2^{j(n-s)(k-2)}\int \left| \tilde{\nu}_{g,j}(z)\right|^2dz+C(k,s,\nu),
		\]
		where we have written $\tilde{\nu}_{g,j}=\sum_{q} \nu_{g,j+q}$ for simplicity. The functions $\nu_{g,j}$ are real valued for all $j\in\mathbb{Z}$ because $\nu_g$ is a real valued measure and $\psi$ is a radial function. Then we see that
		\[
		\int \left| \tilde{\nu}_{g,j}(z)\right|^2dz=\int |\hat{\tilde{\nu}}_{g,j}(\omega)|^2d\omega.
		\]
		Recall that $\nu_g=\mu*g\mu,$ we see that
		\[
		\int |\hat{\tilde{\nu}}_{g,j}(\omega)|^2d\omega\leq \int_{|\omega|\in [2^{j-C(k)/2-1},2^{j+C(k)/2+2}]} |\hat{\mu}(\omega)|^2|\hat{\mu}(g\omega)|^2 d\omega.
		\]
		The integral against $dg$ of the RHS above is a constant multiply 
		\[
		\int_{2^{j-C(k)/2-1}}^{2^{j+C(k)/2+2}} \left(\int |\hat{\mu}(t\sigma)|^2d\sigma\right)^2 t^{n-1}dt\lesssim 
		\int_{2^{j-C(k)/2-1}}^{2^{j+C(k)/2+2}} \left(\int |\hat{\mu}(t\sigma)|^2d\sigma\right) t^{n-1} t^{-\gamma_s+\epsilon}dt,
		\]
		where the above inequality holds for all $\epsilon>0.$ As in the case when $k=2$, we see that,
		\[
		\int\int f^{k-1}_{g,\delta}(z)d\nu_g(z) dg\lesssim C(k,s,\nu)+\sum_{j: 1\leq 2^j\leq 2c''\delta^{-1}} 2^{-j(\gamma_s-\epsilon)}2^{j(n-s)}2^{j(n-s)(k-2)}.
		\]    
		If $(n-s)(k-1)-\gamma_s+\epsilon<0$ then $\nu$ would be an $L^2$ function, otherwise we see that
		\[
		\int \nu^2_\delta(z)dz\lesssim  \delta^{-((n-s)(k-1)-\gamma_s+\epsilon)}.
		\]
		This concludes the proof for the case when $k\geq 3.$
	\end{proof}

	\section{Asymmetric distance sets}\label{asy}
	Let $n\geq 2$ be an integer. Let $F_1,F_2$ are two Borel sets in $\mathbb{R}^n$ with $\Haus F_1=s_1, \Haus F_2=s_2.$  Let $\mu_1,\mu_2$ be probability measures supported on $F_1,F_2$ respectively. For $g\in\mathbb{O}(n)$, the orthogonal group on $\mathbb{R}^n$, we construct a measure $\nu_g$ as follows,
	\[
	\int_{\mathbb{R}^n} f(z)d\nu_g(z)=\int_{F_1}\int_{F_2} f(u-gv)d\mu_1(u)d\mu_2(v),f\in C_0(\mathbb{R}^n).
	\]
	In other words, $\nu_g=\mu_1*g\mu_2.$ We also construct a measure $\nu$ by
	\[
	\int f(t)d\nu(t)=\int f(|x_1-x_2|)d\mu_1(x_1)d\mu_2(x_2).
	\]
	It can be seen that $\nu$ is supported on
	\[
	D(F_1,F_2)=\{|x_1-x_2|: x_1\in F_1, x_2\in F_2\}.
	\]
	Most of the argument in previous sections can be used here. In particular, one can show that for each $\epsilon>0$
	\[
	\|\nu_\delta\|_2^2\lesssim \delta^{-(n-\gamma_{s_1}-s_2-\epsilon)}
	\]
	and
	\[
	\|\nu_\delta\|_2^2\lesssim \delta^{-(n-\gamma_{s_2}-s_1-\epsilon)}.
	\]
	Therefore we see that if $\max\{\gamma_{s_1}+s_2,\gamma_{s_2}+s_1\}>n$ then $D(F_1,F_2)$ has positive Lebesgue measure. If $s2\geq s_1$ then this is equivalent to $s_2+0.5s_1>0.75n+0.5.$ Now we turn to Corollary \ref{ma}. With the same arguments as above we see that if $s_2+0.5s_1>0.75n+0.5$, then for almost all $g\in\mathbb{O}(n)$, $\nu_g$ is absolutely continuous with respect to the Lebesgue measure. In general, we can consider $k\geq 3$ and obtain conditions for $\nu_g$ to be $L^{k}$ for almost all $g\in\mathbb{O}(n).$
	\section{Further discussions: Tubular incidence estimates}\label{dis}
	We discuss here tubular incidence estimates and their connections with Falconer's distance set problem and the Kakeya problem. Before the discussions, we note that tubular situations are often quite different (and perhaps more difficult) than their discrete analogies. See for example \cite[Remark 1.5]{W}. The proofs of discrete point-line incidence estimates in $\mathbb{R}^3$, for example Szemeredi-Trotter's and Guth-Katz's bound, often involve graph theoretic or polynomial methods. Those methods can not be directly applied in tubular settings. Nonetheless, we will use the discrete incidence estimates as intuitive guesses for what could be done in tubular settings.
	\subsection{Falconer's distance set problem and tubular incidence estimates}
	In this section, we discuss a different group-theoretic approach to distance set ($k=2$). Hopefully, this sheds some lights on the Falconer's distance problem and other similar problems.
	
	Let $\mu_1,\mu_2\in\mathcal{P}([0,1]^2)$ be  Borel probability measures. Let $S_1,S_2\in\mathbb{O}(2)$ and $s_1,s_2\in\mathbb{R}^2$ then we see that for each $x\in\mathbb{R}^2$
	\[
	S_2(S_1(x)+s_1)+s+2= S_2S_1(x)+S_2(s_1)+s_2,
	\]
	and
	\[
	S_1(S_1^{-1}(x)-S_1^{-1}(s_1))+s_1=x_1.
	\]
	In this way we see that actions $S(.)+s$ for $S\in\mathbb{O}(2),s\in\mathbb{R}^2$ form a group. We denote this group as $\mathbb{E}(2).$ We can equip $\mathbb{E}(2)$ with the product topology $\mathbb{O}(2)\times \mathbb{R}^2$. Then $E(2)$ is a locally compact topological group. Since we are interested in $[0,1]^2$, we only need to consider a compact subset $U(2)$ (not a subgroup) of $\mathbb{E}(2)$ whose $\mathbb{R}^2$ part is bounded inside the ball of radius $3$ centred at the origin. We can equip $\mathbb{E}(2)$ with the metric in $\mathbb{O}(2)\times\mathbb{R}^2.$ The Haar measure $\lambda$ which gives measure $1$ on $U(2)$ is equivalent to the Lebesgue on $\mathbb{O}(2)\times\mathbb{R}^2.$ More precisely $\lambda=c\lambda_{\mathbb{O}(2)}\times \lambda_{\mathbb{R}^2}$, where $c>0$ is a constant, $\lambda_{O(2)}$ is the Haar probability measure of $\mathbb{O}(2)$ and $\lambda_{\mathbb{R}^2}$ is the Lebesgue measure.
	
	Let $\nu$ be the asymmetric distance measure discussed in the previous section. It can be checked that
	\[
	\|\nu_\delta\|_2^2\lesssim \delta^{-1}\mu_1^{2}\times \mu_2^2\{((x_1,x_2),(x_3,x_4): ||x_1-x_2|-|x_3-x_4||\leq \delta \}.
	\]
	For each $x_1,x_2,x_3,x_4$ with $||x_1-x_2|-|x_3-x_4||\leq \delta$ we can find $S\in\mathbb{O}(2)$ and $s\in\mathbb{R}^2$ such that
	\[
	|S(x_1)+s-x_3|\leq 2\delta,|S(x_2)+s-x_4|\leq 2\delta. 
	\]
	Now we cover $U(2)$ with $N(\delta)$ many $\delta$-balls and choose for each $i\in\{1,\dots,N(\delta)\}$ an element $g_i$ in each $\delta$-ball. Then we see that
	\[
	\{x_1,x_2,x_3,x_4: ||x_1-x_2|-|x_3-x_4||\leq \delta \}\subset \bigcup_{g_i}\{x_1,x_2,x_3,x_4: |x_1-g_ix_3|\leq 2\delta,|x_2-g_ix_4|\leq 2\delta\}.
	\]
	Then similar argument as in \cite[Section 2]{GILP15} shows that
	\begin{eqnarray*}
		& &\delta^{-1}\mu_1^{2}\times \mu_2^2\{((x_1,x_2),(x_3,x_4): ||x_1-x_2|-|x_3-x_4||\leq \delta \}\\
		&\lesssim& \delta^{-4} \int_{U(2)} \mu_1^2\times\mu^2_2\{((x_1,x_2),(x_3,x_4)): |x_1-g(x_3)|\leq 2\delta, |x_2-g(x_4)|\leq 2\delta\}  dg\\
		&=&  \delta^{-4} \int_{U(2)} (\mu_1\times\mu_2\{x_1,x_3: |x_1-g(x_3)|\leq 2\delta\})^2  dg.
	\end{eqnarray*}
	
	As we will see later, we need to consider the elements in $U(2)$ whose $\mathbb{O}(2)$ part has small rotation angle. We define $U'(2)$ to be the elements in $U(2)$ whose $\mathbb{O}(2)$ part $S$ is away from the identity, more precisely, $\|S-I\|\geq 0.1$ in operator norm. We assume that
	\[
	\int_{U'(2)} (\mu_1\times\mu_2\{x_1,x_3: |x_1-g(x_3)|\leq 2\delta\})^2  dg\geq 0.5 \int_{U(2)} (\mu_1\times\mu_2\{x_1,x_3: |x_1-g(x_3)|\leq 2\delta\})^2  dg.\tag{Tech}
	\]
	The intuition behind the above condition is that $\mu_1,\mu_2$ do not prefer any particular direction. An extreme case which does not satisfy the above condition is when $\mu_1$ and $\mu_2$ are supported on finitely many line segments. We need condition $(Tech)$ for a particular coordinate system which will be introduced later and it is very likely to be redundant.
	
	For each $j\in\mathbb{Z}$ we consider the set
	\[
	G_j=\{g\in U(2):   2^j\leq \mu_1\times\mu_2\{x_1,x_3: |x_1-g(x_3)|\leq 2\delta\}\leq 2^{j+1}         \}.
	\]
	Then we see that
	\[
	\int_{U(2)} (\mu_1\times\mu_2\{x_1,x_3: |x_1-g(x_3)|\leq 2\delta\})^2  dg\approx \sum_{j} 2^{2j}\lambda(G_j).
	\]
	For each $x_1,x_3\in [0,1]^2$ we denote the following tubular set
	\[
	T(x_1,x_3,\delta)=\{g\in U(2): |x_1-g(x_3)|\leq \delta     \}.
	\]
	We choose a $\delta$-separated set $F_1$ and $F_2$ in $supp(\mu_1)+B_\delta(0)$ and $supp(\mu_1)+B_\delta(0)$ respectively. If
	\[
	2^j\leq \mu_1\times \mu_2\{x_1,x_3: |x_1-g(x_3)|\leq 2\delta\}\leq 2^{j+1}  ,   
	\]
	there should be some number (say, $N_\delta(\mu_1,\mu_2,j)$) of pairs $x_1,x_3$ such that
	\[
	g\in T(x_1,x_3,\delta).
	\]
	Assume for now that $\mu_1,\mu_2$ are AD-regular with the same exponent $s\in (0,2).$ Then we can estimate $N_\delta(\mu_1,\mu_2,j)$ by $2^j\delta^{-2s}.$ Thus, we need to estimate the $\lambda$ measure of the set in $U(2)$ which lies in roughly $2^j\delta^{-2s}$ many tubular sets of form $T(x_1,x_3,\delta).$ In general, if $\mu_1,\mu_2$ are $s$-Forstman measures then we can bound $N_\delta(\mu_1,\mu_2,j)$ from below by $2^j\delta^{-2s}.$ 
	
	We have translated the distance problem into a different point of view. For each pair $(x_1,x_3)\in F_1\times F_2$ we draw the tubular set $T(x_1,x_3,\delta)\subset U(2).$ For each $j\in\mathbb{Z}$ we need to estimate the $\lambda$ measure of $2^j\delta^{-2s}$-rich set
	\[
	R_{j}=\{g\in U(2): g\text{ lies in at least $2^j\delta^{-2s}$ many tubular sets}\} .
	\]
	We expect that for large $j$, $\lambda(R_j)$ should be small. This can be seen as a tubular version of the incidence problem. It is difficult to consider the incidence problem directly in $\mathbb{O}(2)\times\mathbb{R}^2.$ We need to choose a suitable coordinate system. Let $(S,s)\in \mathbb{O}(2)\times\mathbb{R}^2.$ Then we see that if $S\neq I$, the identity in $\mathbb{O}(2),$ we can find $x_0=(S-I)^{-1}(-s).$ Then for each $x\in\mathbb{R}^2$ we have
	\[
	S(x)+s=S(x-x_0)+S(x_0)+s=S(x-x_0)+x_0-s+s=S(x-x_0)+x_0.
	\]
	We see that the action $(S,s)$ is a rotation around the fix point $x_0.$ If $S=I$ we can still intuitively consider $(S,s)$ as a rotation around the point of $\infty.$ Now we have the following map
	\[
	G_1:(S,s)\in\mathbb{O}(2)\times\mathbb{R}^2\to (S, (S-I)^{-1}(-s))\in\mathbb{O}(2)\times\mathbb{R}^2.
	\]
	In plain words, an action of form $S(.)+s$ is mapped under $G_1$ to the corresponding action defined by rotating around a fix point. Since $\mathbb{O}(2)$ is a dimension one Lie group with two connected component. Any element of $\mathbb{O}(2)$ can be written as a rotation around the origin composed  possibly a mirror reflection with respect to a line through the origin. We can identify the rotation part $\mathbb{O}(2)$ with $[0,2\pi].$ Denote $\theta$ to be this identification map. Then for each $(S,s)\in\mathbb{O}(2)\times\mathbb{R}^2$ we can give it the coordinate $(s,\cot (\theta(S)/2))\in\mathbb{R}^3.$ Denote this coordinate function as $G_2,$ then $G_2 \circ G_1$ is the coordinate function we want to use for $\mathbb{E}(2).$ Under this coordinate system the tubular sets of form $T(x_1,x_3,\delta)$ are essentially $\delta$-neighbourhoods of lines. We need to consider elements in $U(2)$ with trivial $\mathbb{O}(2)$ parts (pure translations) separately since such elements are mapped to $\mathbb{R}^2\times\{\infty\}$ under $G_2\circ G_1.$ Similar considerations can be found in \cite[Section 9.4]{Guth}. We consider the standard topology and the Lebesgue measure $\lambda_{\mathbb{R}^3}$ on $\mathbb{R}^3,$ if we pull back those structures via $(G_2 \circ G_1)^{-1}$ the resulting structures are different than the original topology and measure we chose for $\mathbb{E}(2).$ However, the map $G_2\circ G_1$ is not contracting in the sense that the image of any $r$-ball in $\mathbb{E}(2)$ contains a $c_1r$-ball in $\mathbb{R}^3$ where $c_1>0$ is a constant. On the other hand, we can see that $G_2\circ G_1$ is expands a lot if the rotation part is close to the identity. By the above arguments we see that there is a constant $c_2>0$ such that for any measurable set $A\subset\mathbb{E}(2)$, we have
	\[
	c_2^{-1}(\min\{\theta: (x_1,x_2,\theta)\in G_2\circ G_1 (A)\})^{4} \lambda_{\mathbb{R}^3}(G_2\circ G_1 (A))\leq \lambda(A)\leq c_2 \lambda_{\mathbb{R}^3}(G_2\circ G_1 (A)).
	\]
	Since we have the technical condition $(Tech)$ we only need to consider $\theta>0.1$. Therefore, the push back of the standard topology and measure structure from $\mathbb{R}^3$ to $\mathbb{E}(2)$ is equivalent to the original structures we introduced. For simplicity we do not write $G_2\circ G_1$ since we will be always working with the new coordinate system. For the same reason we use $\lambda$ for the Lebesgue measure on $\mathbb{R}^3.$
	Suppose that two tubes $T(x_1,x_3,\delta)\cap T(y_1,y_3,\delta)\neq\emptyset,$ then there is a $g$ such that $x_1$ is $\delta$-close to $g(x_3)$ and $y_1$ is $\delta$-close to $g(y_3).$ If $g$ is not a translation, then it is a rotation with a fixed point. Since $F_1, F_2$ are $\delta$-separated, if the distance between $F_1$ and $F_2$ is larger than $0.5$, we can assume that $T(x_1,x_3,\delta), T(y_1,y_3,\delta)$ are pointing at directions with separation at least $0.01\delta$ (viewing as tubes in the new coordinate system). This separation conditions does not hold in general, here the $0.5$-separation condition between $F_1,F_2$ is crucial. We write the cardinality of $F_1, F_2$ as $\delta^{-s_B}$ (assume for simplicity that $F_1,F_2$ have the same cardinality), where $s_B$ can be essentially interpreted as the box dimension of $supp(\mu_1)$ or $supp(\mu_2).$ Motivated by a result in \cite{GK} we expect (although it might be really difficult to show) that \[\lambda(R_j)\lesssim (\delta^{-2s_B})^{1.5}(2^j\delta^{-2s})^{-2}\delta^{3}=2^{-2j}\delta^{3+4s-3s_B}.\]
	If $\mu$ is lower regular in the sense that there is $\alpha>0$ such that $\mu_1(B)\geq 0.5\delta^{\alpha},\mu_2(B)\geq 0.5\delta^{\alpha}$ for all $\delta$ and the above estimate would be true then we see that 
	\[
	\|\nu_\delta\|_2^2\lesssim \delta^{-4}\sum_{j} 2^{2j}\lambda(G_j)\lesssim \delta^{-4}\sum_{j} 2^{2j}\lambda(R_j)\lesssim \max\{-\delta^{-1+4s-3s_B}\log \delta,1\}.
	\]
	Furthermore, if $4s-3s_B>1$ then $\nu$ would have $L^2$ density. Here we observe that there are only roughly $-\log \delta$ many values of $j$ which need to be considered. This is because $\mu_1\times \mu_2\{x_1,x_3: |x_1-g(x_3)|\leq 2\delta\}\in [0.25\delta^{-2\alpha},100\delta^s].$ The lower bound comes from the lower regularity and the upper bound comes from the fact that $\mu$ is a $s$-Frostman measure. We pose here the following problem.
	\begin{ques}\label{hard}
		Let $l_i,i\in\{1,\dots,L\}$ be $L$ unit segments in $\mathbb{R}^3.$ Suppose that each degree $2$ algebraic surface contains at most $\sqrt{L}$ many of those unit segments. Let $\delta>0$ be a small number. Suppose further that if $L_i^\delta\cap l_j^\delta\neq\emptyset$ then the directions of $l_i,l_j$ separate at least $\delta.$ For each $r\geq 1,$ let $P_r$ be the set of points contained in at least $r$ many of tubes $l^\delta_i,i\in\{1,\dots,L\}.$ Can the following inequality holds with a suitable constant $C>0,$\[
		\lambda(P_r)\leq C \delta^{3}(L^{1.5}r^{-2}+L/r)?\tag{Guess}
		\]
		Here we can assume that $\delta^{-\alpha}\leq L\leq 2\delta^{-\alpha}$ for a number $\alpha\in [2,3).$
	\end{ques}
	The above result is a direct analogy of a discrete incidence estimate in $\mathbb{R}^3$ which was proved in \cite{GK} with polynomial methods. Notice that we have ignored the effect of the term $L/r$. In fact, this term is significant if $r>\sqrt{L}.$ In our setting, this is $2^j\geq \delta^{s}$ and therefore we can safely ignore this term. Unfortunately, $(Guess)$ seems not to be easily obtainable directly by alternating the proofs of their discrete relatives. However, we shall provide an estimate which is much weaker than $(Guess).$
	
	If $F$ is AD-regular with dimension $s$. We consider $F=F_1\cup F_2$ as discussed before with $F_1,F_2$ separated by at least $0.5.$ Let $\mu_1,\mu_2$ be AD-regular measures support on $F_1,F_2$ respectively. Assume that the condition $(Tech)$ is satisfied. Let $\delta>0$ be a small number and we choose $\delta$-separated subsets $F_{1,\delta}, F_{2,\delta}$ of $F_1,F_2$ respectively. We are interested in the tubular sets
	\[
	T(x_1,x_3,\delta), x_1\in F_{1,\delta}, x_3\in F_{2,\delta}.
	\]
	It is more convenient to consider the lines
	\[
	l_{x_1,x_3}=T(x_1,x_3,0), x_1\in F_{1,\delta}, x_3\in F_{2,\delta}
	\]
	and their $\delta$-neighbourhoods. By the argument in the proof of Theorem \ref{weak}, we need to estimate the following sum
	\[
	\sum_{
		(x_1,y_1,x_3,y_3)\in F^2_{1,\delta}\times F^2_{2,\delta}, (x_1,x_3)\neq (y_1,y_3)}\lambda(l_{x_1,x_3}^{3\delta}\cap l_{y_1,y_3}^{3\delta}).
	\]
	Let $(x_1,x_3)\in F_{1,\delta}\times F_{2,\delta}$ be fixed. Let $\Delta>0$ be a number which is larger than $\delta$. We want to estimate the number of lines of form $l_{y_1,y_3}$ such that $l^{3\delta}_{x_1,x_3}\cap l^{3\delta}_{y_1,y_3}\neq \emptyset$ and the directions between $l_{x_1,x_3}, l_{y_1,y_3}$ are bounded by $\Delta$ and $2\Delta.$ The direction vector of $l_{x_1,x_3}$ is $((x_1+x_3)/|x_1+x_3|, 0.5|x_1-x_3|).$ If $l_{z_1,z_3}\cap l_{x_1,x_3}\neq\emptyset$ then there is a vector $x_0$ such that
	\[
	\angle x_1x_0x_3=\angle z_1x_0z_3.
	\]
	The separation condition between $F_{1,\delta}, F_{2,\delta}$ ensures that $|x_1-x_3|\geq 0.5.$ If the directions of $l_{x_1,x_3}$ and $l_{z_1,z_3}$ are separated by at most $\Delta,$ then the directions of $x_0-(x_1+x_2)/2$ and $x_0-(z_1+z_3)/2$ are separated by at most $\Delta$ and at the same time, $|z_1-z_3|$ and $|x_1-x_3|$ differs by at most $\Delta.$ Since the angle $\angle x_1x_0x_3$ is larger than $1/10,$ we see that $z_1,z_3$ must be contained in a $10000\Delta$-ball around $(x_1+x_3)/2.$ There are $\lesssim (\Delta/\delta)^{2s}$ many such pairs $z_1\in F_{1,\delta}, z_3\in F_{2,\delta}.$ By summing up the above bounds for $\Delta=2^j\delta$ with integers $j\geq 1$, we have the following estimate
	\[
	\sum_{
		(x_1,y_1,x_3,y_3)\in F^2_{1,\delta}\times F^2_{2,\delta}, (x_1,x_3)\neq (y_1,y_3)}\lambda(l_{x_1,x_3}^{3\delta}\cap l_{y_1,y_3}^{3\delta})\lesssim L\sum_{j\in\mathbb{N}:2^j\leq 2\delta^{-1}} 2^{2sj}\delta^3/(2^j \delta)\lesssim \delta^{3-2s}L.
	\]
	Recall that we have $L\approx \delta^{-2s}$ since $F$ is AD-regular. In this case we have
	\[
	\lambda(P_r)\lesssim \delta^3 L^2/r^2.\tag{Weak}
	\]
	
	This bound is much weaker than $(Guess).$ As the counting argument above is all very elementary, it is possible that one can improve the above result $(Weak)$, for example, the following Szemer\'{e}di-Trotter type bound
	\[
	\lambda(P_r)\lesssim \delta^3 L^2/r^3.\tag{SzT}
	\]
	If this is the case we see that
	\[
	\|\nu_\delta\|_2^2\lesssim \delta^{-4}\sum_{j}2^{2j} \delta^3 \delta^{-2s}/(2^j \delta^{-2s})^3= \sum_j \delta^{-1+2s} 2^{-j}.
	\]
	Since $2^{-j}$ can be at most as large as $\delta^{-s}$ we see that
	\[
	\|\nu_\delta\|_2^2\lesssim \max\{\delta^{-1+s},1\}.
	\]
	Therefore if $s>1$ we see that $\nu$ is absolutely continuous with respect to the Lebesgue measure. We collect the results we proved so far in the following theorem.
	
	\begin{thm}
		Let $F\subset\mathbb{R}^2$ be a Borel set with $\Haus F>s>1.$ Suppose that $F$ supports a $s$-Frostman measure $\mu$ with condition $(Tech).$  Then $(Guess)$ implies that $D(F)$ has a positive Lebesgue measure. If moreover $\mu$ is AD-regular, then the weaker estimate $(SzT)$ implies that $D(F)$ has a positive Lebesgue measure.
	\end{thm}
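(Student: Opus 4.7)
My plan is to execute the strategy sketched in Section~\ref{dis}, wiring together the group-theoretic $L^2$ reduction already derived in that section with the conditional tubular incidence bounds $(Guess)$ and $(SzT)$.

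First, I pass from the symmetric to the asymmetric setting. Since $\Haus F>s>1$, we may (after a harmless rescaling) pick two balls $B_1,B_2\subset\mathbb{R}^2$ with $\mathrm{dist}(B_1,B_2)\geq 0.5$ and $\mu(B_i)>0$; set $F_i:=F\cap B_i$ and $\mu_i:=\mu|_{F_i}/\mu(F_i)$, which are $s$-Frostman probability measures on separated pieces. Because $D(F_1,F_2)\subseteq D(F)$, it suffices to show that the asymmetric distance measure $\nu$ attached to $\mu_1,\mu_2$ has an $L^2$ density. The hypothesis $(Tech)$ lets us replace all averages over $U(2)$ by averages over $U'(2)$ up to a constant factor, and on $U'(2)$ the coordinate chart $G_2\circ G_1$ from Section~\ref{dis} is bi-Lipschitz with constants depending only on the $0.1$ threshold.

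Second, I combine the reduction displayed just before the statement,
\[
\|\nu_\delta\|_2^2\lesssim \delta^{-4}\int_{U'(2)}\bigl(\mu_1\times\mu_2\{(x_1,x_3):|x_1-g(x_3)|\leq 2\delta\}\bigr)^2\,dg\approx \delta^{-4}\sum_j 2^{2j}\lambda(G_j),
\]
with the tube-cover reduction established there: the Frostman bound $\mu_i(B(y,\delta))\leq \delta^s$ forces every $g\in G_j$ to lie in at least $r_j\approx 2^j\delta^{-2s}$ of the tubes $T(x_1,x_3,\delta)$, while the total number of such tubes is $L\lesssim \delta^{-2s}$. Thus $G_j\subseteq R_{r_j}$ in the coordinate system of Question~\ref{hard}, and each assumed incidence bound feeds directly into the sum. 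Under $(Guess)$ with $s_B=s$, the calculation already carried out in the paragraph producing $\lambda(R_j)\lesssim 2^{-2j}\delta^{3+4s-3s_B}$ yields
\[
\|\nu_\delta\|_2^2\lesssim |\log\delta|\,\delta^{s-1},
\]
which is bounded as $\delta\to 0$ precisely because $s>1$. Under the AD-regular hypothesis together with $(SzT)$, the analogous calculation displayed in the text gives $\|\nu_\delta\|_2^2\lesssim \delta^{s-1}$, with the same conclusion. A standard weak-$*$ compactness argument in $L^2$ extracts a density for the weak limit of $\nu_\delta$, which must coincide with $\nu$; hence $\nu\ll\mathrm{Leb}$ and $D(F_1,F_2)\subset D(F)$ has positive Lebesgue measure.

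The present argument is essentially bookkeeping once the inputs are in place; the substantive obstacle, and the reason the statement is conditional, is the unavailability of $(Guess)$ and $(SzT)$. The polynomial and graph-theoretic methods proving their discrete analogues in \cite{GK} do not transfer cleanly to $\delta$-thickened tubes, and even verifying the direction-separation hypothesis of Question~\ref{hard} for the specific family $\{T(x_1,x_3,\delta)\}$ depends delicately on the $0.5$-separation between $F_1,F_2$ together with the fact that $G_2\circ G_1$ is well-behaved only away from pure translations.
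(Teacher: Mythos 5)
Your proposal follows the paper's own route essentially verbatim: the theorem is stated there as a summary of the computation immediately preceding it in Section 7, and you reproduce exactly that chain --- splitting $F$ into two $0.5$-separated pieces, the $\delta^{-4}\int_{U'(2)}(\cdots)^2\,dg$ reduction under $(Tech)$, the identification $G_j\subseteq R_{r_j}$ with $r_j\approx 2^j\delta^{-2s}$, and the substitution of $(Guess)$ resp.\ $(SzT)$ to obtain $\|\nu_\delta\|_2^2\lesssim |\log\delta|\,\delta^{s-1}$ resp.\ $\delta^{s-1}$, both bounded since $s>1$. The one step you should be aware of is that your bound $L\lesssim\delta^{-2s}$ (i.e.\ taking $s_B=s$) in the $(Guess)$ branch is only genuinely justified for AD-regular measures, but this looseness is inherited directly from the paper's own derivation (which needs $4s-3s_B>1$), so your argument introduces no new gap.
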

	\subsection{Tubular incidence estimates and the Kakeya problem}
	Tubular estimates are related to the Kakeya problem. In this section, we consider the problem in $\mathbb{R}^3.$ A Kakeya set is a subset of $\mathbb{R}^3$ which contains unit line segments along all directions. It is of great interest to show whether all Kakeya sets have full box dimension. We start this section by showing a tubular incidence estimate.
	\begin{thm}\label{weak}
		Let $\{l_i\}_{i\in\{1,\dots,L\}}$ be $L$ unit line segments in $\mathbb{R}^3$ whose the directions of are $\delta$-separated, then there is a constant $C>0$ such that
		\[
		\lambda(P_r)\leq C\delta^2 L^{1.5}/r^2.
		\]
	\end{thm}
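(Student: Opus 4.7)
The plan is to run a Córdoba--Wolff $L^{2}$ argument. Let $f=\sum_{i=1}^{L}\chi_{l_i^\delta}$ denote the multiplicity function. Since $f\geq r$ on $P_r$, Chebyshev gives
\[
r^{2}\lambda(P_r)\leq \int f^{2}\,dx=\sum_{i,j=1}^{L}\bigl|l_i^\delta\cap l_j^\delta\bigr|,
\]
so it is enough to prove that the right-hand side is $\lesssim \delta^{2}L^{3/2}$. The diagonal contributes $\sum_i |l_i^\delta|\approx L\delta^{2}$, which is dominated by the target provided $L\geq 1$, so the real work lies in the off-diagonal sum.

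The off-diagonal estimate rests on two classical geometric inputs. First, if two unit tubes of radius $\delta$ in $\mathbb{R}^{3}$ have directions making an angle $\theta\geq\delta$ then elementary geometry gives $|l_i^\delta\cap l_j^\delta|\lesssim \delta^{3}/\theta$. Second, because the direction set $\{e_i\}$ is $\delta$-separated on $S^{2}$, the number of $j\neq i$ with $\angle(e_i,e_j)\in[\theta,2\theta]$ is bounded by $C\min\bigl((\theta/\delta)^{2},L\bigr)$: the first factor is a packing bound on the spherical annulus, the second is the trivial bound coming from the total count.

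Combining these, for each fixed $i$ the contribution from a dyadic angular annulus at scale $\theta$ to $\sum_j|l_i^\delta\cap l_j^\delta|$ is $\lesssim \min\bigl((\theta/\delta)^{2},L\bigr)\cdot \delta^{3}/\theta$. I would then split the dyadic range $\theta\in[\delta,1]$ at the critical scale $\theta_0=\delta\sqrt{L}$: below $\theta_0$ use the packing bound, producing terms of the form $\delta\theta$ that sum geometrically up to $\delta\theta_0=\delta^{2}\sqrt{L}$; above $\theta_0$ use the bound $L$, producing terms of the form $L\delta^{3}/\theta$ that sum geometrically down to $L\delta^{3}/\theta_0=\delta^{2}\sqrt{L}$. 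Both halves give $\lesssim \delta^{2}\sqrt{L}$, hence $\sum_{j\neq i}|l_i^\delta\cap l_j^\delta|\lesssim \delta^{2}\sqrt{L}$ uniformly in $i$; summing over the $L$ choices of $i$ yields $\int f^{2}\lesssim \delta^{2}L^{3/2}$, and dividing by $r^{2}$ completes the proof. The only delicate point—and the reason one should resist simply applying $\min(a,b)\leq \sqrt{ab}$ uniformly across all scales—is the choice of the cutoff $\theta_0=\delta\sqrt{L}$, which is precisely what is required to balance the two regimes and avoid a spurious $\log(1/\delta)$ factor.
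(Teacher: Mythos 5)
Your proposal is correct and follows essentially the same route as the paper: a Chebyshev/$L^2$ (C\'ordoba-type) argument reducing $\lambda(P_r)$ to the pairwise sum $\sum_{i\neq j}\lambda(l_i^\delta\cap l_j^\delta)$, which the paper carries out in the equivalent language of counting $k$-rich $\delta$-cubes. Your dyadic decomposition in angle with the critical scale $\theta_0=\delta\sqrt{L}$ is in fact a welcome improvement in rigor, since the paper's sketch merely asserts that the pairwise sum is maximized when the directions cluster and states the bound $\lesssim\delta^2L^{1.5}$ without computation.
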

	\begin{proof}[Sketch]
		We note here that the direction separation condition implies that $L\lesssim \delta^{-2}.$ We can decompose $\mathbb{R}^3$ into $\delta$-cubes with disjoint interiors. For each $\delta$-cube, to call it a $k$-cube if there are $k$ lines among $\{l_1,\dots,l_L\}$ intersecting it. We need to count essentially the number of $k$-cubes for $k\geq r.$ Let $k$ be an integer and consider a $k$-cube $K$. Suppose that $l_1,\dots,l_k$ are the $k$ lines intersecting $K$. We want to consider $l^{3\delta}_i\cap l^{3\delta}_j$ for $i,j\in \{1,\dots,k\}.$ We see that $\lambda(l^{3\delta}_i\cap l^{3\delta}_j\cap K)\geq c \delta^3$ where $c>0$ is an absolute constant. Since there are $\approx k^2$ many pairs of lines intersecting $K$, by summing up $k\geq 2$ we see that
		\[
		\sum_{k\geq 2}c\delta^3 k^2 \#_k\leq \sum_{1\leq i<j\leq L}\lambda(l^{3\delta}_i\cap l^{3\delta}_j),
		\] 
		where $\#_k$ is the number of $k$-cubes. Then we see that
		\[
		\lambda(P_r)\leq 1000\sum_{k\geq r} \delta^3 \#_k \leq 1000 r^{-2} \sum_{1\leq i<j\leq L}\lambda(l_i^{3\delta}\cap l_j^{3\delta}).
		\]
		Since the directions of lines are $\delta$-separated, we see that sum $\sum_{i,j}$ in above achieve the largest value when the directions between lines are as close as possible. This gives us the following estimate
		\[
		\sum_{1\leq i<j\leq L}\lambda(l_i^{3\delta}\cap l_j^{3\delta})\leq 1000 \delta^2 L^{1.5}.
		\]
		Therefore we see that
		\[
		\lambda(P_r)\leq 1000^2 \delta^2 L^{1.5}/r^2.
		\]
		This concludes the proof.
	\end{proof}
	In the above proof we use the worst case bound for $    \sum_{1\leq i<j\leq L}\lambda(l_i^{3\delta}\cap l_j^{3\delta})$ and it is simple to see that for this worst case bound to be achieved, all the tubes have to intersect in a small region. In this case we can perform the following estimate. First, we choose a maximal $10\delta$-separated set $\mathcal{S}_\delta$ in $S^2.$ For each $e\in \mathcal{S}_\delta$ we choose $l_e$ to be the line passing through the origin with direction $e$. Then we consider $l^\delta_i,i\in\mathcal{S}_\delta.$ Let $r\geq 2$ be an integer. If a point $x\in\mathbb{R}^3$ lies in at least $r$ many of the tubes then $|x|\leq 10/\sqrt{r}.$ To see this, let $|x|S^{2}$ be the sphere passing through $x.$ The intersections of $l_i,i\in\mathcal{S}_\delta$ with $|x|S^{2}$ forms a $|x|\delta$-separated set. Therefore if $|x|\geq 10/\sqrt{r}$ then the separation is at least $10\delta/\sqrt{r}.$ Then  $B_{\delta}(x)\cap |x|S^{2}$ intersects at most $0.5r$ many lines. Therefore we see that
	\[
	\lambda(P_r)\leq \lambda(B_{10/\sqrt{r}}(0))\lesssim \delta^3 L^{1.5}/r^{1.5}.
	\]
	For the last inequality we observe that $L\approx \delta^{-2}.$ It is also possible to check that the other direction holds as well
	\[
	\lambda(P_r)\gtrsim \delta^3 L^{1.5}/r^{1.5}.
	\]
	Therefore the estimate in Theorem \ref{weak} is unlikely to be sharp. 
	
	Without further conditions, estimate $(Guess)$ above cannot hold in the direction separation case. Now, let us examine a random case. Suppose choose the positive of each line segment in $\{l_i\}_{i\leq L}$ randomly. More precisely, we choose the centre of each $l_i$ according to the Lebesgue measure in $[0,1]^3.$ Then we see that the expected value of $\sum_{1\leq i<j\leq L}\lambda(l_i^{3\delta}\cap l_j^{3\delta})$ is roughly $\delta$ times the worst case bound. This is because for each pair of lines $l_i,l_j,$ the chance that the distance between $l_i,l_j$ is smaller than $3\delta$ is $\lesssim \delta.$ For example if $l_i,l_j$ are orthogonal, then for each fixed $l_i,$ the centre of $l_j$ must be inside a $8\delta$-neighbourhood of a disc of radius $10$. This gives a probability of order $\delta.$ On the other hand, if $l_i,l_j$ are almost parallel in the sense that their directions are separated by less than $10\delta$ then for each fixed $l_i,$ then centre of $l_j$ must be contained in a $40\delta$-neighbourhood of a line segment of length $10$. This gives a probability of order $\delta^2.$ For all other cases we can perform similar arguments. Thus we see that the expected value of $\lambda(P_r)$ is $\lesssim \delta^3 L^{1.5}/r^2.$
	
	It is interesting to see whether the case when all the lines are as close to each other as possible actually achieves the highest tubular incidence counting. For this reason, we pose the following problem.
	\begin{conj}\label{harder}
		Let $\{l_i\}_{i\in\{1,\dots,L\}}$ be $L$ unit line segments in $\mathbb{R}^3$ whose the directions of are $\delta$-separated, then there is a constant $C>0$ such that
		\[
		\lambda(P_r)\leq C\delta^3 L^{1.5}/r^{1.5}.
		\]
	\end{conj}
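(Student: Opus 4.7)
The plan is to attack the conjecture by refining the second-moment argument used in the proof of Theorem \ref{weak} via a Wolff-style hairbrush argument, together with a polynomial partitioning step in the spirit of Guth-Katz. The basic heuristic is that Theorem \ref{weak} loses a factor of $\delta$ because it uses only the crude pairwise intersection bound $|l_i^{3\delta}\cap l_j^{3\delta}|\lesssim \delta^2/|\sin\theta_{ij}|$ via a worst-case angle, whereas the direction-separation assumption should allow us to integrate against the actual angular distribution and gain the missing factor $\delta$.

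First, I would dyadically pigeonhole over the pairwise angular separations: by losing a factor of $\log(1/\delta)$, I may assume that the main contribution to the incidence count in $P_r$ comes from pairs of tubes whose direction-separation lies in a fixed dyadic scale $[\mu,2\mu]$ with $\mu\in[\delta,1]$. For fixed $\mu$, two crossing tubes meet in a region of volume $\lesssim \delta^3/\mu$, and there are $\lesssim (\mu/\delta)^2$ directions within angle $\mu$ of a given direction; summing these contributions against $r$-fold incidences gives, heuristically, the improvement from $\delta^2 L^{3/2}/r^2$ to $\delta^3 L^{3/2}/r^{3/2}$. To turn the heuristic into an actual bound, I would fix a tube $l_0$ meeting $P_r$ in a large subset and examine the hairbrush of tubes crossing $l_0$ near a heavy point; after projection perpendicular to $l_0$ one obtains a direction-separated planar tubular configuration, to which a $2$-dimensional incidence estimate may be applied, and the conclusion is then propagated back to $\mathbb{R}^3$ with an induction on the number of tubes (or on $r$). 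Alternatively, one could run Guth's polynomial partitioning: choose a partitioning polynomial of degree $D\approx L^{1/3}$, use the cellular estimate on each cell (which meets $\lesssim L/D^2$ tubes) to run induction, and handle the contribution on the zero set via a Szemer\'edi-Trotter-type bound on algebraic surfaces, as in the discrete case treated in \cite{GK}.

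The main obstacle is that the conjectured exponent $r^{-3/2}$ is sharp; the star-like example with all tubes through the origin discussed just before the conjecture shows that any bound better than $\delta^3 L^{3/2}/r^{3/2}$ is false. Matching this endpoint exponent appears to lie at the same depth as the Kakeya conjecture in $\mathbb{R}^3$: interpolating the $L^2$-type bound of Theorem \ref{weak} against a trivial $L^\infty$ bound cannot reach it, and adapting the polynomial method to $\delta$-tubes generically introduces $\delta^{-\varepsilon}$ losses that are difficult to remove. For this reason, I expect the realistic partial result along the above lines to be of the form $\lambda(P_r)\leq C_\varepsilon \delta^{3-\varepsilon}L^{3/2}/r^{3/2}$, valid under mild non-concentration hypotheses such as the AD-regularity assumption already used by the author, while the loss-free conjecture should require a genuinely new geometric idea.
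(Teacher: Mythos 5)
This statement is posed in the paper as a conjecture, not a theorem: the paper gives no proof, and none is expected to be easy, since the theorem immediately following it shows that the conjecture implies every Kakeya set in $\mathbb{R}^3$ has full box dimension. Your proposal is correctly calibrated in its conclusion --- you do not claim a proof, and your assessment that the endpoint lies at the depth of the Kakeya problem in $\mathbb{R}^3$ agrees with the paper's own framing. But as a proof attempt it contains no complete argument, and the one concrete plan you lead with cannot work even in principle.

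Specifically, your opening strategy --- refine the second-moment bound of Theorem \ref{weak} by integrating the pairwise intersection volumes against the actual angular distribution --- is blocked, because the second moment is already saturated by the extremal example. For the star configuration (all $L\approx\delta^{-2}$ tubes through the origin), summing $\lambda(l_i^{3\delta}\cap l_j^{3\delta})\approx\delta^3/\theta_{ij}$ over the $\approx L(\theta/\delta)^2$ pairs at each dyadic angle $\theta$ gives $\sum_{i<j}\lambda(l_i^{3\delta}\cap l_j^{3\delta})\approx L\delta\approx\delta^2L^{1.5}$, with no logarithmic or $\delta$-power room to spare; the angular bookkeeping you propose recovers exactly the bound the paper already has, not an improvement. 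The real deficit is in the exponent of $r$: Chebyshev applied to a second moment that is genuinely of size $\delta^2L^{1.5}$ can never give better than $\delta^2L^{1.5}/r^2$, and the conjectured bound $\delta^3L^{1.5}/r^{1.5}$ is strictly stronger than this for every $r<L$. One would need to control a fractional moment such as $\sum_k k^{1.5}\#_k$ directly --- which is precisely the quantity the paper's conditional Kakeya argument extracts \emph{from} the conjecture, not a route toward proving it --- or find a genuinely non-moment argument. Your fallback suggestions (hairbrush plus planar incidences, polynomial partitioning at degree $D\approx L^{1/3}$) are the standard tools, but as you yourself note they introduce $\delta^{-\epsilon}$ losses and stall short of the endpoint; listing them does not constitute progress. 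In short: there is no paper proof to compare against, and the portion of your write-up that reads as an actual argument rests on a second-moment refinement that the extremal example rules out.
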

	We show that Conjecture \ref{harder} implies that any Kakeya set in $\mathbb{R}^3$ has full box dimension.
	\begin{thm}
		Assume Conjecture \ref{harder}. Under the hypothesis of Theorem \ref{weak} with $L\approx \delta^{-2}$ then 
		\[
		\lambda(\cup_i l_i^{3\delta})\geq C (-\log \delta)^{-2},
		\]
		where $C>0$ is a constant which does not depend on $\delta$ and the choices of lines $\{l_i\}_{i\leq L}.$
	\end{thm}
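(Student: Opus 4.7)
The plan is to apply a single Hölder inequality with exponent $3/2$ to the multiplicity function, together with a dyadic decomposition that feeds Conjecture \ref{harder} directly at its critical scale. Define $m(x)=\sum_{i=1}^{L}\mathbf{1}_{l_i^{3\delta}}(x)$, so $\{m>0\}=\bigcup_i l_i^{3\delta}$ has Lebesgue measure $V$. Because every $l_i^{3\delta}$ has volume $\gtrsim\delta^{2}$ and $L\approx\delta^{-2}$, we get $\int m\,dx=\sum_i\lambda(l_i^{3\delta})\gtrsim 1$. Hölder with the pair $(3/2,3)$ applied to $m=m\cdot\mathbf{1}_{\{m>0\}}$ then yields
\[
1\;\lesssim\;\int m\,dx\;\le\;\Bigl(\int m^{3/2}\,dx\Bigr)^{\!2/3}V^{1/3},
\]
so the whole task reduces to showing $\int m^{3/2}\,dx\lesssim \log(1/\delta)$.

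For this upper bound I would use layer cake and change of variables to write $\int m^{3/2}\,dx=\tfrac{3}{2}\int_0^\infty s^{1/2}\lambda(P_s)\,ds$, and then dyadically decompose to get
\[
\int m^{3/2}\,dx\;\asymp\;\sum_{j\ge 0,\ 2^j\le L} 2^{3j/2}\,\lambda(P_{2^j}).
\]
Here Conjecture \ref{harder} enters exactly once: $\lambda(P_{2^j})\le C\delta^{3}L^{3/2}\cdot 2^{-3j/2}$, and since $L\approx\delta^{-2}$ the prefactor $\delta^{3}L^{3/2}$ is of order $1$. So each dyadic term is $O(1)$, and since $m\le L\approx\delta^{-2}$ the dyadic range contains only $\lesssim \log(1/\delta)$ scales. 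This gives $\int m^{3/2}\,dx\lesssim \log(1/\delta)$, and substituting back into Hölder yields $V\gtrsim (\log(1/\delta))^{-2}$.

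The only real piece of craft is the choice of the Hölder exponent. The exponent $3/2$ is tuned so that Conjecture \ref{harder} makes each dyadic block contribute a uniform constant, leaving a logarithm that comes purely from counting scales. Choosing instead $L^2$ (i.e. Cauchy--Schwarz on $m$) would make the sum $\sum_j 2^{2j}\lambda(P_{2^j})$ diverge polynomially with $j_{\max}\approx 2\log_2(1/\delta)$ and yield only $V\gtrsim\delta$, which is much weaker. I don't expect any genuine obstacle beyond noticing this critical scaling: the conjecture is the hard input, while its implication for the Kakeya box dimension is a short $L^{3/2}$ argument, and the only routine bookkeeping is verifying $\int m\gtrsim 1$, the $j$-range $j_{\max}\lesssim \log(1/\delta)$, and the layer-cake identity.
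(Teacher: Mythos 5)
Your proof is correct and is essentially the paper's argument: the paper runs the same H\"older inequality with exponents $(3/2,3)$ on the discrete cube counts $\#_k$ (its $\sum_k k\#_k$, $\sum_k k^{3/2}\#_k$, $\sum_k\#_k$ are exactly your $\int m$, $\int m^{3/2}$, $V$ up to the factor $\delta^3$), and bounds $\sum_k k^{3/2}\#_k$ by the same dyadic sum over richness levels in which Conjecture \ref{harder} makes each block $O(\delta^{-3})$ and the $\log(1/\delta)$ comes from counting scales. Your continuous layer-cake phrasing is a cosmetic repackaging of the paper's $\delta$-cube bookkeeping, and both land on $V\gtrsim(\log(1/\delta))^{-2}$ the same way.
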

	\begin{proof}
		We continue using the notation in the proof of Theorem \ref{weak}. We see that
		\[
		\lambda(\cup_i l_i^{3\delta})\approx \sum_{k\geq 1} \delta^{3} \#_k.
		\]
		Now observe the followings estimate,
		\[
		\sum_{k\geq 1}\delta^{3} k\#_k\approx \sum_{i}\lambda(l_i^{3\delta}) .
		\]
		By Conjecture \ref{harder} we see that for each $r\geq 2$
		\[
		\sum_{k\geq r} \delta^{3} \#_k\leq \delta^{3} \delta^{-2\times 1.5}/r^{1.5}.
		\]
		By H\"{o}lder's inequality we see that
		\[
		\sum_{k\geq 2} k\#_k\leq \left(\sum_{k\geq 2} k^{1.5}\#_k \right)^{1/1.5}\left(\sum_{k\geq 2} \#_k \right)^{1/3}.\tag{H}
		\]
		Now we see that there is a number $C'>0$ such that
		\[
		\sum_{k\geq 2} k^{1.5}\#_k=\sum_{j\geq 1}\sum_{k\in [2^j,2^{j+1}]} k^{1.5} \#_k\leq \sum_{j\geq 1} 2^{1.5j+1.5} \delta^{-3} 2^{-1.5j}\leq C' \delta^{-3}(-\log \delta).
		\]
		For the last step we use the fact that we only need to consider $j$ such that $2^j\leq L.$ If $\#_1\geq \sum_{k\geq 2}k\#_k$ then
		\[
		\sum_{k\geq 1}\#_k\geq \#_1\geq 0.5 \sum_{k\geq 1}k\#_k\gtrsim \delta^{-3}. 
		\]
		If  $\#_1\leq \sum_{k\geq 2}k\#_k$ then by $(H)$
		\[
		\left(\sum_{k\geq 1}\#_k\right)^{1/3}\geq \left(\sum_{k\geq 2}\#_k\right)^{1/3}\gtrsim \frac{\delta^{-3}}{(\delta^{-3}(-\log \delta))^{1/1.5}}
		\]
		In any case we see that there is a number $C>0,$
		\[
		\sum_{k\geq 1}\#_k\geq C(-\log \delta)^{-2}\delta^{-3}.
		\]
		This number $C$ does not depends on $\delta$ and the choices of line segments. This concludes the proof.
	\end{proof}
	
	To conclude this section, we show an almost sharp tubular incidence estimate in $\mathbb{R}^2.$ Let $\{l_i\}_{i\leq L}$ be $L$ many unit segments in $\mathbb{R}^2$ whose directions form a $10\delta$-separated set in $S^1.$ This forces $L\lesssim \delta^{-1}.$ Now consider the case when all the lines are centred at the origin and the lines are as close to each other as possible. Then a similar computation as in the case for $\mathbb{R}^3$ shows that
	\[
	\lambda(P_r)\approx \delta L/r^2,
	\]
	where $\lambda$ is the Lebesgue measure on $\mathbb{R}^2$ and
	\[
	P_r=\{x\in\mathbb{R}^2:x\text{ belongs to at least } r \text{ tubes among } l^{\delta}_i,i\leq L     \}.
	\]
	To see where the factor $\delta L$ comes from, observe that the lines $l_i,i\leq L$ essentially occupy a circular sector with angle $\approx L\delta.$ On the other hand, we have the following result which is basically Cordoba's estimate, see \cite{C74}.
	\begin{thm}
		Let $\{l_i\}_{i\in\{1,\dots,L\}}$ be $L$ unit line segments in $\mathbb{R}^2$ whose the directions of are $\delta$-separated, then there is a constant $C>0$ such that
		\[
		\lambda(P_r)\leq C\delta L\log L/r^2.
		\]
	\end{thm}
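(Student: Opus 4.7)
The plan is to mimic the double-counting scheme from the proof of Theorem \ref{weak}, replacing the trivial worst-case bound on pairwise tube intersections with the classical Cordoba bilinear estimate in $\mathbb{R}^2$. Specifically, I will partition $\mathbb{R}^2$ into $\delta$-squares with disjoint interiors, and for each integer $k\geq 1$ let $\#_k$ denote the number of such squares which meet exactly $k$ of the tubes $l_i^\delta$. Then $\lambda(P_r)\lesssim \sum_{k\geq r}\delta^2 \#_k$, and by observing that any two tubes meeting a common $k$-square contribute at least $c\delta^2$ to $\lambda(l_i^{3\delta}\cap l_j^{3\delta})$, the usual Loomis--Whitney/pigeonhole step yields
\[
\sum_{k\geq 2} \delta^2 k^2\#_k \lesssim \sum_{1\leq i<j\leq L}\lambda(l_i^{3\delta}\cap l_j^{3\delta}).
\]
Dividing by $r^2$ and summing from $k=r$ gives $\lambda(P_r)\lesssim r^{-2}\sum_{i<j}\lambda(l_i^{3\delta}\cap l_j^{3\delta})$.

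The key analytic input is the planar pairwise tube estimate: if two $\delta$-tubes $l_i^{3\delta}, l_j^{3\delta}$ in $\mathbb{R}^2$ have directions separated by an angle $\theta_{ij}$, then
\[
\lambda(l_i^{3\delta}\cap l_j^{3\delta})\lesssim \frac{\delta^2}{\max(\theta_{ij},\delta)}.
\]
This is precisely Cordoba's lemma (cf. \cite{C74}), and it is the decisive improvement over the trivial bound $\lesssim \delta^2$ that gives Theorem \ref{weak} in dimension three. Once this is granted, I would sum over $j$ with $i$ fixed: the $\delta$-separation of directions forces $\{\theta_{ij}\}_{j\neq i}$ to be $\delta$-separated on $S^1$, so
\[
\sum_{j\neq i}\lambda(l_i^{3\delta}\cap l_j^{3\delta})\lesssim \sum_{k=1}^{L}\frac{\delta^2}{k\delta}\lesssim \delta\log L,
\]
and summing over $i$ produces $\sum_{i<j}\lambda(l_i^{3\delta}\cap l_j^{3\delta})\lesssim \delta L\log L$. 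Combining with the double-counting step yields the claimed bound $\lambda(P_r)\leq C\delta L\log L/r^2$.

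There is essentially no obstacle, since the nontrivial ingredient (Cordoba's bilinear bound) is cited from the literature; the only care needed is to absorb harmless constants (e.g.\ the factor $3$ in $l_i^{3\delta}$) and to handle small angles $\theta_{ij}\lesssim \delta$ separately, where we simply use the trivial bound $\lambda(l_i^{3\delta}\cap l_j^{3\delta})\lesssim \delta^2$ and note that the $\delta$-separation allows at most a bounded number of such $j$'s for each fixed $i$. The $\log L$ loss is intrinsic and matches the sharpness discussion in the paragraph preceding the statement.
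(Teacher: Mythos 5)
Your proposal is correct and follows essentially the same route as the paper: the double-counting over $\delta$-squares from Theorem \ref{weak} reduces everything to $\sum_{i<j}\lambda(l_i^{3\delta}\cap l_j^{3\delta})$, and the pairwise bound $\lambda(l_i^{3\delta}\cap l_j^{3\delta})\lesssim \delta^2/\max(\theta_{ij},\delta)$ together with the $\delta$-separation of directions gives $\lesssim \delta\log L$ per fixed $i$ and hence $\lesssim\delta L\log L$ in total. Your explicit handling of the small-angle case $\theta_{ij}\lesssim\delta$ is a minor tidying of what the paper leaves implicit.
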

	\begin{proof}
		The proof is similar to that of Theorem \ref{weak}. We need to estimate the following sum,
		\[
		\sum_{1\leq i<j\leq L}\lambda(l_i^{3\delta}\cap l_j^{3\delta}).
		\]
		The above sum achieves the largest value when all the lines are as close to each other as possible. Now we take $i=1$ and consider the following sum
		\[
		\sum_{j\geq 2} \lambda(l^{3\delta}_1\cap l^{3\delta}_j).
		\]
		In the worst case, the directions of lines $l_j,j\geq 2$ are of displacement $-(L-1)\delta/2,-(L+1)\delta/2,\dots,(L-1)\delta$ with respect to $l_1.$ For two lines $l_1,l_j$ with direction separation $\Delta$ we have the following estimate
		\[
		\lambda(l^{3\delta}_1\cap l^{3\delta}_j)\leq C' \delta^2/\Delta
		\]
		for an absolute constant $C'>0.$ Therefore we see that
		\[
		\sum_{j\geq 2} \lambda(l^{3\delta}_1\cap l^{3\delta}_j)\lesssim \delta\log L.
		\]
		This shows that
		\[
		\sum_{1\leq i<j\leq L}\lambda(l_i^{3\delta}\cap l_j^{3\delta})\lesssim \delta L\log L.
		\]
		Combining with the argument in the proof of Theorem \ref{weak} we see that
		\[
		\lambda(P_r)\lesssim \delta L\log L/r^2.
		\]
	\end{proof}
	As we have $L\lesssim \delta^{-1},$ we see that the above theorem is almost sharp, up to a $-\log \delta$ factor.
	
	\section{Acknowledgement.}
	HY was financially supported by the university of St Andrews.

	\providecommand{\bysame}{\leavevmode\hbox to3em{\hrulefill}\thinspace}
	\providecommand{\MR}{\relax\ifhmode\unskip\space\fi MR }
	\providecommand{\MRhref}[2]{%
		\href{http://www.ams.org/mathscinet-getitem?mr=#1}{#2}
	}
	\providecommand{\href}[2]{#2}
	
	\bibliographystyle{amsalpha}
	
\end{document}